\theoremstyle{plain}
\newtheorem{thm}{Theorem}[section]
\newtheorem{prop}[thm]{Proposition}
\newtheorem{lemma}[thm]{Lemma}
\theoremstyle{definition}
\newtheorem{defn}[thm]{Definition}
\newtheorem{rmk}[thm]{Remark}
\newtheorem{rmks}[thm]{Remarks}
\newtheorem{notat}[thm]{Notation}
\newcommand{\G}{{\mathcal G}}
\newcommand{\HH}{{\mathcal H}}
\newcommand{\LL}{{\mathcal L}}
\newcommand{\M}{{\mathcal M}}
\newcommand{\N}{{\mathcal N}}
\newcommand{\NN}{{\mathbb N}}
\newcommand{\U}{{\mathcal U}}
\newcommand{\X}{{\mathcal X}}
\newcommand{\ZZ}{{\mathbb Z}}
\DeclareMathOperator{\hgt}{ht}
\DeclareMathOperator{\lk}{lk}
\title{Gr\"obner bases via linkage}
\author{E. Gorla}
\address{Institut f\"ur Mathematik \\ Universit\"at Basel, 
\hfil\break\indent Rheinsprung 21, CH-4051 Basel, Switzerland} 
\email{elisa.gorla@unibas.ch}
\thanks{The first author was supported by the Swiss National Science
  Foundation under grant no. 123393. She acknowledges financial
  support from the University of Notre Dame and the University of 
  Kentucky, where part of this work was done.}
\author{J. C. Migliore}
\address{Department of Mathematics \\ University of Notre Dame, 
\hfil\break\indent Room 236, Hayes-Healy Building, Notre Dame, IN
46556-5641, USA} 
\email{migliore.1@nd.edu}
\thanks{ The second author was supported
  by the National Security Agency under Grant Number
  H98230-09-1-0031.}
\author{U. Nagel}
\address{Department of Mathematics \\ University of Kentucky,
 \hfil\break\indent 715 Patterson Office Tower, Lexington, KY
 40506-0027, USA}
\email{uwe.nagel@uky.edu}
\thanks{The third author was supported by the National
  Security Agency under Grant Number H98230-09-1-0032.} 
\subjclass[2010]{Primary 13C40, 14M12, 13P10, 14M06}
\begin{document}

\begin{abstract} In this paper, we give a sufficient condition for a
  set $\G$ of polynomials to be a Gr\"obner basis with respect to a
  given term-order for the ideal $I$ that it generates. Our criterion
  depends on the linkage pattern of the ideal $I$ and of the ideal
  generated by the initial terms of the elements of $\G$.
  We then apply this criterion to ideals generated by minors and
  pfaffians. More precisely, we consider large families of ideals
  generated by minors or pfaffians in a matrix or a ladder, where the
  size of the minors or pfaffians is allowed to vary in different
  regions of the matrix or the ladder. We use the sufficient condition
  that we established to prove that the minors or pfaffians form a
  reduced Gr\"obner basis for the ideal that they generate, with
  respect to any diagonal or anti-diagonal term-order. We also
  show that the corresponding initial ideal is Cohen-Macaulay and
  squarefree, and that the simplicial complex associated to it is
  vertex decomposable, hence shellable. Our proof relies on known
  results in liaison theory, combined with a simple Hilbert function
  computation. In particular, our arguments are completely algebraic.
\end{abstract}

\maketitle

\section*{Introduction}

Gr\"obner bases are the most widely applicable
computational tool available in the context of commutative
algebra and algebraic geometry. 
However they also are an important theoretical tool, as they can be
used to establish properties such as, e.g., primality,
normality, Cohen-Macaulayness, and to give formulas for the height of
an ideal. Liaison theory, or linkage, on the other hand, is mostly 
regarded as a classification tool. In fact, much effort has been
devoted in recent years to the study of liaison classes, in particular
to deciding which ideals belong to the G-liaison class of a complete
intersection. However, a clear understanding of the liaison pattern of
an ideal often allows us to recursively compute invariants such as its
Hilbert function and graded Betti numbers. 

In this paper we introduce liaison-theoretic methods as a tool in the
theory of Gr\"obner bases. More precisely, we deduce that a certain set
$\G$ of polynomials is a Gr\"obner basis for the ideal $I$ that it
generates by understanding the linkage pattern of the ideal $I$ and of
the monomial ideal generated by the initial terms of the elements of
$\G$. Concretely, we apply this reasoning to ideals generated by minors
or pfaffians, whose liaison pattern we understand.

Ideals generated by minors or pfaffians have been studied extensively
by both commutative algebraists and algebraic geometers. The study of
determinantal rings and varieties is an active area of research per
se, but it has also been instrumental to the development of new
techniques, which have become part of the commonly used tools in
commutative algebra. Ideals generated by minors and pfaffians are
studied in invariant theory and combinatorics, and are relevant in
algebraic geometry. In fact, many classical varieties such as the
Veronese and the Segre varieties are cut out by minors or
pfaffians. Degeneracy loci of morphisms between direct sums 
of line bundles over projective space have a determinantal
description, as do Schubert varieties, Fitting schemes, and some toric
varieties. Ideals generated by minors or pfaffians are
often investigated by commutative algebraists by 
means of Gr\"obner basis techniques (see, e.g., \cite{br88},
\cite{st90}, \cite{he92}). Using such tools, many families of ideals
generated by minors or pfaffians have been shown to enjoy properties
such as primality, normality, and Cohen-Macaulayness.  
A different approach to the study of ideals generated by minors is via
flags and degeneracy loci, and was initiated in~\cite{fu91}. Such an
approach allows one to establish that these ideals are normal,
Cohen-Macaulay and have rational singularities (see, e.g.,
\cite{kn05}, \cite{kn09}, \cite{kn10}).

In recent years, much progress has been made towards understanding
determinantal ideals and varieties also from the point of view of
liaison theory. A central open question in liaison theory asks whether 
every arithmetically Cohen-Macaulay scheme is glicci, i.e., whether it
belongs to the G-liaison class of a complete intersection. In~\cite{ga53},
\cite{kl01}, \cite{go07a}, \cite{go07b}, \cite{go08}, \cite{de09}, 
and~\cite{go10}, several families of ideals generated by minors or 
pfaffians are shown to be glicci. More precisely, it is shown that
they can be obtained from an ideal generated by linear forms via a
sequence of ascending elementary G-biliaisons. Moreover, each of the
elementary G-biliaisons takes place between two ideals which both
belong to the family in question. Since the linkage steps are
described very explicitly, in theory it is possible to use the linkage
results to recursively compute invariants or establish properties of
these ideals. This has been done, e.g., in~\cite{co09} using the
linkage results from~\cite{go07b}.  

Rather than contributing to the theory of liaison (see for
instance~\cite{mi98}), in this paper we give a new method 
of using liaison as a tool.
More precisely, we consider large families of ideals generated by
minors or pfaffians in a matrix or a ladder, namely pfaffian ideals of
ladders, mixed determinantal, and symmetric mixed determinantal
ideals. Combining the liaison results from~\cite{go07b}, \cite{de09},
and~\cite{go10} with a Hilbert function computation, we are able to
prove that the pfaffians or the minors are a reduced Gr\"obner basis
for the ideal that they generate, with respect to any anti-diagonal or
diagonal term-order.  Moreover, we show the simplicial complex
corresponding to the initial ideal of any ideal in the family that we
consider is vertex decomposable. Vertex decomposability is a strong
property, which in particular implies shellability of the complex and
Cohen-Macaulayness of the associated initial ideal. 

In Section~\ref{mainlemma}, we prove a lemma which will be central to
the subsequent arguments (Lemma~\ref{inid}). The lemma gives a
sufficient criterion for a monomial ideal to be the initial ideal of a 
given ideal $J$. Both the ideal $J$ and the ``candidate'' initial
ideal are constructed via Basic Double Linkage or elementary
biliaison. In Section~\ref{ex_sect} we use Lemma~\ref{inid} to prove
that the maximal minors of a matrix of indeterminates are a Gr\"obner
basis of the ideal that they generate with respect to any diagonal (or
anti-diagonal) term-order. Although the result is well-known, we wish
to illustrate our method by showing how it applies to this simple
example. In Sections~\ref{pfaff_sect}, \ref{symm_sect}, and
\ref{ladd_sect}, we apply our technique to ideals generated by: 
pfaffians of mixed size in a ladder of a skew-symmetric matrix of
indeterminates, minors of mixed size in a ladder of a symmetric matrix
of indeterminates, and minors of mixed size in a ladder of a
matrix of indeterminates. We prove that the natural generators of
these ideals are a Gr\"obner basis with respect to any diagonal (in
the case of minors of a symmetric matrix) or anti-diagonal (in the
case of pfaffians or minors in a generic matrix) term-order. We also
prove that the corresponding initial ideals are Cohen-Macaulay, and
that the associated simplicial complexes are vertex decomposable.
While Sections~\ref{mainlemma} and~\ref{ex_sect} are meant to be read
first, Sections~\ref{pfaff_sect}, \ref{symm_sect}, and~\ref{ladd_sect}
can be read independently of each other, and in any order. In the
appendix, we indicate how our liaison-theoretic approach can be made
self-contained in order to derive also all the classical
Gr\"obner basis results about ladder determinantal ideals from
one-sided ladders.

\section{Linkage and Gr\"obner bases}\label{mainlemma}

Let $K$ be an arbitrary field and let $R$ be a standard graded
polynomial ring in finitely many indeterminates over $K$. 
In this section, we give a sufficient condition for a set $\G$
of polynomials to be a Gr\"obner basis with respect to a given
term-order for the ideal $I$ that it generates. Our criterion depends
on the linkage pattern of the ideal $I$ and of the monomial ideal
generated by the initial terms of the elements of $\G$. 

In order to use geometric language, we need to consider the
algebraic closure of the field $K$. Notice however that restricting
the field of coefficients does not affect the property of being a
Gr\"obner basis, as long as the polynomials are defined over the
smaller field. More precisely, if $I=(g_1,\ldots,g_s)\subset
K[x_0,\ldots,x_n]$ and $g_1,\ldots,g_s$ have coefficients in a
subfield $k$ of $K$, then: $g_1,\ldots,g_s$ are a Gr\"obner
basis of $I\subseteq K[x_0,\ldots,x_n]$ if and only if they are a
Gr\"obner basis of $I\cap k[x_0,\ldots,x_n]$.
In this sense, the property of being a Gr\"obner basis
does not depend on the field of definition. Therefore, while proving
that a set $\G$ of polynomials is a  Gr\"obner basis with respect to a
given term-order for the ideal $I$ that it generates, we may pass to
the algebraic closure without loss of generality. We shall therefore
assume without loss of generality that the field $K$ is algebraically
closed. 

\begin{notat} 
Fix a term-order $\sigma$. Let $I\subset R$ be an ideal and let
$\G$ be a set of polynomials in $R$. We denote by $in(I)$ the initial
ideal of $I$ with respect to $\sigma$, and by $in(\G)$ the set of
initial terms of the elements of $\G$ with respect to $\sigma$.
\end{notat}

For the convenience of the reader, we recall the definition of
diagonal and anti-diagonal term-order.

\begin{defn}
Let $X$ be a matrix (resp. a skew-symmetric or a symmetric matrix) of
indeterminates. Let $\sigma$ be a term-order on the set of terms of
$K[X]$. The term-order $\sigma$ is {\bf diagonal} if the leading term
with respect to $\sigma$ of the determinant of a submatrix of $X$ is
the product of the indeterminates on its diagonal. It is {\bf
  anti-diagonal} if the leading term with respect to $\sigma$ of the
determinant of a submatrix of $X$ is the product of the indeterminates
on its anti-diagonal. 
\end{defn}

\begin{notat}
Let $A$ be a finitely generated, graded $R$-module. We denote by
$H_A(d)$ the {\bf Hilbert function} of $A$ in degree $d$, i.e., the
dimension of $A_d$ as a $k$-vector space. 
\end{notat}

In this paper we study large families of ideals generated by minors or
pfaffians in a matrix or a ladder, where the size of the minors or
pfaffians is allowed to vary in different regions of the matrix or the
ladder. We study their initial ideals with respect to a diagonal or
anti-diagonal term-order, and we prove that the associated simplicial
complexes are vertex decomposable. In particular, the initial ideals in
question are Cohen-Macaulay. For the convenience of the reader, we now
recall the main definitions.

\begin{defn} 
A {\bf simplicial complex} $\Delta$ on $n+1$ vertices, is a collection
of subsets of $\{0,\ldots,n\}$ such that for any $F\in\Delta$, if
$G\subseteq F$, then $G\in\Delta$. An $F\in\Delta$ is called a {\bf
  face} of $\Delta$. The dimension of a face $F$ is $\dim F=|F|-1$,
and the {\bf dimension} of the complex is 
$$\dim\Delta=\max\{\dim F\mid F\in\Delta\}.$$
The complex $\Delta=2^{\{0,\ldots,n\}}$ is called a {\bf simplex}.

The {\bf vertices} of $\Delta$ are the subsets of $\{0,\ldots,n\}$ of
cardinality one. The faces of $\Delta$ which are maximal with respect
to inclusion are called {\bf facets}. A complex is {\bf pure} if all
its facets have dimension equal to the dimension of the complex.
\end{defn}

\begin{notat}
To each face $F\in\Delta$ we associate the following two
simplicial subcomplexes of $\Delta$: the {\bf link} of $F$
$$\lk_F(\Delta)=\{G\in\Delta\mid F\cup G\in\Delta, F\cap
G=\emptyset\}$$ and the {\bf deletion}
$$\Delta-F=\{G\in\Delta\mid F\cap G=\emptyset\}.$$
If $F=\{k\}$ is a vertex, we denote the link of $F$ and the deletion by
$\lk_k(\Delta)$ and $\Delta-k$, respectively.
\end{notat}

\begin{defn}
A simplicial complex $\Delta$ is {\bf vertex decomposable} if it is a
simplex, or it is the empty set, or there exists a vertex $k$ such that 
$\lk_k(\Delta)$ and $\Delta-k$ are both pure and vertex decomposable,
and $$\dim\Delta=\dim(\Delta-k)=\dim\lk_k(\Delta)+1.$$
\end{defn}

In this article, we show that the simplicial complexes associated to
the initial ideals of the family of ideals that we consider are vertex
decomposable.

\begin{defn}
The {\bf Stanley-Reisner ideal} associated to a complex $\Delta$ on
$n+1$ vertices is the squarefree monomial ideal
$$I_{\Delta}=(x_{i_1},\ldots,x_{i_s}\mid
\{i_1,\ldots,i_s\}\not\in\Delta)\subset K[x_0,\ldots,x_n].$$
Conversely, to every squarefree monomial ideal $I\subseteq
K[x_0,\ldots,x_n]$ one can associate the unique simplicial complex
$\Delta(I)$ on $n+1$ vertices, such that $I_{\Delta(I)}=I.$
\end{defn}

\begin{rmk}
\begin{enumerate}
\item A vertex $\{k\}\in\Delta$ is called a {\bf cone point} if for every
face $F\in\Delta$, $F\cup\{k\}\in\Delta$. If $\Delta$ has a cone point
$\{k\}$, then $$I_{\Delta}=I_{\Delta-k}K[x_0,\ldots,x_n].$$
Moreover, $\Delta$ is vertex decomposable if and only if $\Delta-k$
is. Therefore, {\em we will not distinguish between a complex and a
  cone over it}.
\item Notice that, if $\Delta$ is a complex on $n+1$ vertices, then
  both $\lk_k(\Delta)$ and $\Delta-k$ are complexes on $n$ (or fewer)
  vertices. However, since we do not distinguish between a complex and
  a cone over it, we will regard them as complexes on $n+1$ vertices.
\item On the side of the associated Stanley-Reisner ideals, let $I$
  and $J$ be squarefree monomial ideals such that the generators of $I$
  involve fewer variables than the generators of $J$. Then we may associate
  to $I$ and $J$ simplicial complexes $\Delta(I)$ and $\Delta(J)$ on
  the same number of variables. This amounts to regarding $I$ and $J$
  as ideals in the same polynomial ring.
\end{enumerate}
\end{rmk}

We now recall some definitions from liaison theory that will be
fundamental throughout the paper.

\begin{defn}\label{g0}
Let $J\subset R$ be a homogeneous, saturated ideal. We say that $J$
is {\bf Gorenstein in codimension $\leq$ c} if the localization
$(R/J)_P$ is a Gorenstein ring for any prime ideal $P$ of $R/J$ of
height smaller than or equal to $c$. We often say that $J$ is
$G_c$. We call {\bf generically Gorenstein}, or $G_0$, an ideal $J$
which is Gorenstein in codimension 0.
\end{defn}

\begin{defn}\label{bdl}
Let $A\subset B\subset R$ be homogeneous ideals such that $\hgt A=\hgt
B- 1$ and $R/A$ is Cohen-Macaulay. Let $f\in R_d$ be a homogeneous element
of degree $d$ such that $A:f=A$. The ideal $C:=A+fB$ is called a {\bf Basic
Double Link} of degree $d$ of $B$ on $A$. If moreover $A$ is $G_0$ and
$B$ is unmixed, then $C$ is a {\bf Basic Double G-Link} of $B$ on $A$.
\end{defn}

\begin{defn}\label{bilid}
Let $I,J\subset R$ be homogeneous, saturated, unmixed ideals, such
that $\hgt(I)=\hgt(J)=c$. We say that $J$ is obtained by an {\bf
elementary biliaison} of height $\ell$ from $I$ if there exists a
Cohen-Macaulay ideal $N$ in $R$ of height $c-1$
such that $N\subseteq I\cap J$ and $J/N\cong [I/N](-\ell)$ as
$R/N$-modules. If in addition the ideal $N$ is $G_0$, then $J$ is
obtained from $I$ via an {\bf elementary G-biliaison}. If $\ell>0$ we
have an {\bf ascending} elementary G-biliaison.
\end{defn}

We refer to~\cite{mi98}, \cite{kl01}, and~\cite{ha07} for the basic 
properties of Basic Double Linkage and elementary biliaison. Notice in
particular that, if $C$ is a Basic Double Link of $B$ on $A$, then it is
not known in general whether $B$ and $C$ belong to the same G-liaison
class. On the other side, if $C$ is a Basic Double G-Link of $B$ on
$A$, then $B$ and $C$ can be G-linked in two steps.

We are now ready to state a sufficient condition for a set
of polynomials to be a Gr\"obner basis (with respect to a given
term-order) for the ideal that they generate.

\begin{lemma}\label{inid}
Let $I,J,N\subset R$ be homogeneous, saturated, unmixed ideals, such that
$N\subseteq I\cap J$ and $\hgt(I)=\hgt(J)=\hgt(N)+1$. Assume that $N$
is Cohen-Macaulay. 
Let $A,B,C\subset R$ be monomial ideals
such that $C\subseteq in(J)$, $A=in(N)$ and $B=in(I)$ with respect to
some term-order $\sigma$. Assume that $A$ is Cohen-Macaulay and
that $\hgt(B)=\hgt(A)+1$. Suppose that $J$ is obtained from $I$ via an
elementary biliaison of height $\ell$ on $N$, and that $C$ is a Basic
Double Link of degree $\ell$ of $B$ on $A$. Then $C=in(J)$.
\end{lemma}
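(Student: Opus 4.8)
The plan is to show the two inclusions $C \subseteq in(J)$ (which is given) and $in(J) \subseteq C$ by a Hilbert function argument. The key observation is that all the relevant Hilbert functions can be compared term by term, and that the hypotheses force equality everywhere. First I would recall the standard fact that a homogeneous ideal and its initial ideal have the same Hilbert function: $H_{R/J} = H_{R/in(J)}$, $H_{R/I} = H_{R/B}$, and $H_{R/N} = H_{R/A}$. Since $C \subseteq in(J)$, we have $H_{R/C}(d) \geq H_{R/in(J)}(d) = H_{R/J}(d)$ for every $d$, so it suffices to prove the reverse inequality $H_{R/C}(d) \leq H_{R/J}(d)$ for all $d$; equality of Hilbert functions of the two homogeneous ideals $C \subseteq in(J)$ then forces $C = in(J)$.

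Next I would compute each side. On the ideal side, $J$ is obtained from $I$ by an elementary biliaison of height $\ell$ on $N$, so $J/N \cong [I/N](-\ell)$ as $R/N$-modules; taking Hilbert functions gives
\begin{equation*}
H_{R/J}(d) = H_{R/N}(d) - H_{I/N}(d) = H_{R/N}(d) - H_{I/N}(d-\ell+\ell)
\end{equation*}
— more precisely, from the exact sequence $0 \to N \to J \to J/N \to 0$ and $J/N \cong (I/N)(-\ell)$ we get $H_{J/N}(d) = H_{I/N}(d-\ell) = H_{R/N}(d-\ell) - H_{R/I}(d-\ell)$, hence
\begin{equation*}
H_{R/J}(d) = H_{R/N}(d) - H_{R/N}(d-\ell) + H_{R/I}(d-\ell).
\end{equation*}
On the monomial side, $C = A + fB$ is a Basic Double Link of degree $\ell$ of $B$ on $A$, where $f$ has degree $\ell$ and $A:f = A$. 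The defining short exact sequences of a Basic Double Link (see the references \cite{mi98}, \cite{kl01}, \cite{ha07} cited above) give
\begin{equation*}
H_{R/C}(d) = H_{R/A}(d) - H_{R/A}(d-\ell) + H_{R/B}(d-\ell).
\end{equation*}
Now substituting $H_{R/A} = H_{R/N}$ and $H_{R/B} = H_{R/I}$ shows $H_{R/C}(d) = H_{R/J}(d)$ for all $d$, which is even better than the inequality I needed. Combined with $C \subseteq in(J)$, this gives $C = in(J)$.

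The main subtlety, and the step I would be most careful about, is making sure the Basic Double Link Hilbert function formula is available in exactly the stated generality: Definition~\ref{bdl} requires $\hgt A = \hgt B - 1$ and $R/A$ Cohen-Macaulay, both of which are among our hypotheses ($A$ is Cohen-Macaulay and $\hgt(B) = \hgt(A)+1$), and $A:f = A$, which is needed for the Basic Double Link to be defined and for the exact sequence $0 \to (R/A)(-\ell) \xrightarrow{f} R/A \oplus (R/B)(-\ell) \to R/C \to 0$ (or the equivalent mapping-cone sequence) to be exact. Likewise, on the liaison side, $N$ Cohen-Macaulay, $N \subseteq I \cap J$, unmixedness, and the height conditions are precisely what is assumed, so the module isomorphism $J/N \cong [I/N](-\ell)$ holds by hypothesis and no extra work is needed there. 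A secondary point worth a sentence is the remark in the excerpt that one may pass to the algebraic closure of $K$ without affecting the property of being a Gröbner basis — but in fact the present lemma is a pure Hilbert function statement and does not even need that reduction, so I would simply note that the argument is characteristic-free and field-independent. The essential content is thus the identical shape of the two Hilbert function recursions, one coming from elementary biliaison and one from Basic Double Linkage, together with the coincidence of the initial Hilbert functions.
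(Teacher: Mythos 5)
Your proof is correct and is essentially the paper's own argument: the paper also observes that $C\subseteq in(J)$ reduces the claim to an equality of Hilbert functions and then verifies
$H_C(d)=H_B(d-\ell)+H_A(d)-H_A(d-\ell)=H_I(d-\ell)+H_N(d)-H_N(d-\ell)=H_J(d)$,
exactly your combination of the Basic Double Link formula, the biliaison isomorphism $J/N\cong (I/N)(-\ell)$, and the invariance of Hilbert functions under passing to initial ideals. The only blemish is the intermediate line ``$H_{R/J}(d)=H_{R/N}(d)-H_{I/N}(d)$'' (it should read $H_{R/J}(d)=H_{R/N}(d)-H_{J/N}(d)$ with $H_{J/N}(d)=H_{I/N}(d-\ell)$), which your subsequent ``more precisely'' computation already corrects.
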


\begin{proof}
Since $C\subseteq in(J)$, it suffices to show that $H_C(d)=H_J(d)$ for
all $d\in\ZZ$. This is indeed the case, since 
$$H_C(d)=H_B(d-\ell)+H_A(d)-H_A(d-\ell)=H_I(d-\ell)+H_N(d)-H_N(d-\ell)=H_J(d).$$
\end{proof}

\begin{rmks}
\begin{enumerate}
\item Notice that, if $J$ is obtained from $I$ via an elementary
  biliaison on $N$, we do not know in general whether they belong 
  to the same G-liaison class. However, if in addition $N$ is
  generically Gorenstein, then $J$ is obtained from $I$ via an
  elementary G-biliaison on $N$. In particular, it can be obtained
  from $I$ via two Gorenstein links on $N$.
\item If in addition $A$ is generically Gorenstein, then $C$ is a
  Basic Double G-Link of $B$ on $A$. In particular, it can be obtained
  from $B$ via two Gorenstein links on $A$. 
\item The concepts of Basic Double Linkage and biliaison are
  interchangeable in the statement of Lemma~\ref{inid}. More
  precisely, Basic Double Linkage is a special case of
  biliaison. Moreover, it can be shown that if $J$ is obtained from
  $I$ via an elementary biliaison of height $\ell$ on $N$, then there
  exist an ideal $H$ and a $d\in\ZZ$ s.t. $H$ is a Basic Double Link
  of degree $d+\ell$ of $I$ on $N$ and also a Basic Double Link of
  degree $d$ of $J$ on $N$. Then it is easy to verify that the lemma
  holds under the weaker assumption that $C$ is obtained from $B$ via
  an elementary biliaison of height $\ell$ on $A$.
\end{enumerate}
\end{rmks}

In the next section, we use the lemma to prove that the maximal minors
of a matrix of indeterminates are a Gr\"obner basis of the ideal that
they generate with respect to any diagonal term-order. Although the
result is well-known, we wish to illustrate our method by showing how it
applies to this simple example.
In Sections~\ref{pfaff_sect}, \ref{symm_sect}, and \ref{ladd_sect}, we
apply the lemma to ideals generated by: 
pfaffians of mixed size in a ladder of a skew-symmetric matrix of
indeterminates, minors of mixed size in a ladder of a symmetric matrix
of indeterminates, and minors of mixed size in a one-sided ladder of a
matrix of indeterminates. We prove that the natural generators of
these ideals are a Gr\"obner basis with respect to any diagonal (in
the case of minors of a symmetric matrix) or anti-diagonal (in the
case of pfaffians or minors in a generic matrix) term-order. We also
prove that their initial ideals are squarefree and that they can be
obtained from an ideal generated by indeterminates via a sequence of
Basic Double G-links of degree 1, which only involve squarefree
monomial ideals. In particular, they are glicci (i.e., they can be
obtained from a complete intersections via a sequence of G-links),
hence they are Cohen-Macaulay. Moreover, we prove that the simplicial
complexes associated to their initial ideals are vertex decomposable. 

Notice that, if we knew a priori that the simplicial complexes
associated to the initial ideals are vertex decomposable, then we
could deduce that the corresponding squarefree monomial ideals are
glicci by the following result of Nagel and R\"omer. However, we
cannot directly apply their result in our situation, since we need to
first produce the Basic Double G-links on the squarefree monomial
ideals, in order to deduce that the associated simplicial complexes
are vertex decomposable.

\begin{thm}[\cite{na08}, Theorem~3.3]
Let $\Delta$ be a simplicial complex on $n+1$ vertices and let
$I_{\Delta}\subset K[x_0,\ldots,x_n]$ be the Stanley-Reisner ideal of
$\Delta$. Assume that $\Delta$ is (weakly) vertex decomposable. Then
$I_{\Delta}$ can be obtained from an ideal generated by indeterminates
via a sequence of Basic Double G-links of degree 1, which only involve
squarefree monomial ideals.
\end{thm}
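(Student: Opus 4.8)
The plan is an induction on the number of vertices of $\Delta$, in which $I_\Delta$ is produced as a single Basic Double G-link of degree $1$ sitting on top of a sequence of such links for a complex of smaller size. Write $S=K[x_0,\ldots,x_n]$. The base cases are immediate: if $\Delta$ is a simplex then $I_\Delta=(0)$ is (vacuously) generated by indeterminates, and if $\Delta=\{\emptyset\}$ then $I_\Delta=(x_0,\ldots,x_n)$. So assume $\Delta$ is (weakly) vertex decomposable and not a simplex, and let $v$ be a shedding vertex; after relabeling the variables we may take $v$ to be $x_0$, and we may assume $v$ is not a cone point of $\Delta$ (otherwise $I_\Delta=I_{\Delta-v}S$ by the first Remark above and we pass directly to $\Delta-v$). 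We regard $\Delta-v$ and $\lk_v(\Delta)$ as simplicial complexes on the vertex set $\{1,\ldots,n\}$, so that $I_{\Delta-v},\,I_{\lk_v(\Delta)}\subset K[x_1,\ldots,x_n]$; keeping $x_0$ out of these ideals is essential below.

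The first step is the combinatorial identity
$$I_\Delta\ =\ I_{\Delta-v}\,S\ +\ x_0\cdot I_{\lk_v(\Delta)}\,S,$$
checked on squarefree monomials: $x^F\in I_\Delta$ iff $F\notin\Delta$, and for $v\notin F$ this says $F\notin\Delta-v$, while for $v\in F$ it says $F\setminus\{v\}\notin\lk_v(\Delta)$. One also records the inclusion of complexes $\lk_v(\Delta)\subseteq\Delta-v$, so that, with $A:=I_{\Delta-v}\,S$ and $B:=I_{\lk_v(\Delta)}\,S$, we have $A\subseteq B$.

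The second step is to check that this identity presents $I_\Delta$ as a Basic Double G-link of degree $1$ of $B$ on $A$, with $f=x_0$, in the sense of Definition~\ref{bdl}. We have $A\subseteq B$; since $x_0$ occurs in no generator of $A$ it is a nonzerodivisor on $S/A$, i.e. $A:x_0=A$; and $A$, being a radical ideal, is generically reduced, hence $G_0$. A short dimension count — the Krull dimension of the Stanley--Reisner ring of a complex exceeds the dimension of the complex by one, and $x_0$ is a cone point modulo $A$ and modulo $B$ — combined with the equalities $\dim\Delta=\dim(\Delta-v)=\dim\lk_v(\Delta)+1$ from (weak) vertex decomposability, yields $\hgt A=\hgt I_\Delta-1$ and $\hgt B=\hgt I_\Delta$, hence $\hgt A=\hgt B-1$. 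Since $\Delta-v$ and $\lk_v(\Delta)$ are (weakly) vertex decomposable on fewer vertices, the inductive hypothesis applies to $I_{\Delta-v}$ and $I_{\lk_v(\Delta)}$, and hence, after the harmless extension of scalars by $x_0$, to $A$ and $B$: each is obtained from an ideal generated by indeterminates via a sequence of degree-$1$ Basic Double G-links involving only squarefree monomial ideals. As an ideal generated by indeterminates is Cohen--Macaulay and Basic Double Linkage preserves Cohen--Macaulayness, $S/A$ is Cohen--Macaulay, as Definition~\ref{bdl} requires; and $B$ is unmixed, since $\lk_v(\Delta)$ is pure and $I_{\lk_v(\Delta)}$ is radical. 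Thus all hypotheses of Definition~\ref{bdl} hold, and $I_\Delta=A+x_0B$ is a Basic Double G-link of $B$ on $A$. Appending this single link — whose divisor $A$ is a Cohen--Macaulay squarefree monomial ideal — to the sequence for $B$ produces the desired sequence for $I_\Delta$, all of whose members are squarefree monomial ideals, and closes the induction.

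The main obstacle is the second step, and within it the height bookkeeping: one must pin down precisely where the two dimension equalities in the definition of vertex decomposability enter (they are exactly what forces $\hgt A=\hgt B-1$), and one must be scrupulous about the coning convention, since only after regarding $\Delta-v$ and $\lk_v(\Delta)$ as complexes avoiding the vertex $v$ does $x_0$ become a nonzerodivisor modulo $A$ and do the heights shift by one upon extension to $S$. For the \emph{weakly} vertex decomposable case purity is lost, so Cohen--Macaulayness must be replaced throughout by the appropriate sequential/unmixedness statements (and, correspondingly, the degree of the link or the element $f$ may need adjustment); handling this cleanly is the remaining delicate point.
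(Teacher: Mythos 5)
The paper itself gives no proof of this statement: it is quoted from \cite{na08}, with the remark that the slightly stronger formulation follows from the proof given there. Your argument reconstructs exactly that proof, and it is the same mechanism the paper uses in Theorems~\ref{maxmin}, \ref{gbpf}, \ref{gbsym} and~\ref{gbladd}: for a shedding vertex $v$ one has $I_{\Delta}=I_{\Delta-v}S+x_v\,I_{\lk_v(\Delta)}S$, and this exhibits $I_{\Delta}$ as a degree-one Basic Double G-link of $B=I_{\lk_v(\Delta)}S$ on $A=I_{\Delta-v}S$ in the sense of Definition~\ref{bdl}. Your verifications are all correct: $A:x_v=A$ because $x_v$ avoids the generators of $A$; $A$ is $G_0$ because it is radical; $B$ is unmixed because the link is pure; $\hgt A=\hgt B-1$ follows from the two dimension equalities in the definition of vertex decomposability (together with the coning convention); and $A$ is Cohen--Macaulay because, by the inductive hypothesis, it is obtained from an ideal of indeterminates by Basic Double G-links, hence glicci. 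Appending this single degree-one link to the inductive sequence for $B$ closes the induction, so the proposal is right for the vertex decomposable case, which is the only case this paper ever uses. The one loose end is the parenthetical ``weakly'' case, which you flag but do not carry out; it cannot be settled from this paper alone, since weak vertex decomposability is defined only in \cite{na08}, so deferring it is reasonable, but strictly speaking your written argument establishes the unparenthesized statement.
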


Notice that, although the statement above is slightly stronger than
Theorem~3.3 in~\cite{na08}, the result above follows from the proof
in~\cite{na08}.

\section{A simple example: ideals of maximal minors}\label{ex_sect}

This section is meant to illustrate the idea and the method of our
proof on a simple example. We prove that the maximal minors
of a matrix of indeterminates are a Gr\"obner basis of the ideal that
they generate with respect to any diagonal term-order. Notice that for
the case of minors in a matrix, diagonal term-orders are the same as
anti-diagonal ones, up to transposing the matrix. 

\begin{thm}\label{maxmin}
Let $X=(x_{ij})$ be an $m\times n$ matrix whose entries are distinct
indeterminates, $m\leq n$. Let $K[X]=K[x_{ij} \;|\; 1\leq i,j\leq n ]$
be the polynomial ring associated to $X$. Let $\G_m(X)$ be the set of
maximal minors of $X$ and let $I_m(X)\subset K[X]$ be the ideal
generated by $\G_m(X)$. Let $\sigma$ be a diagonal
term-order and let $in(I_m(X))$ be the initial ideal of $I_m(X)$
with respect to $\sigma$. Then $\G_m(X)$ is a reduced Gr\"obner basis
of $I_m(X)$ with respect to $\sigma$ and $in(I_m(X))$ is a squarefree,
Cohen-Macaulay ideal. Moreover, the simplicial complex $\Delta_X$
associated to $in(I_m(X))$ is vertex decomposable.
\end{thm}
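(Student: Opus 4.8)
The plan is to apply Lemma~\ref{inid} inductively, peeling off one row (or column) of the matrix $X$ at a time, in exact parallel on the ideal side and on the monomial side. The key point is that ideals of maximal minors are known to be linked via basic double linkage / elementary biliaison to ideals of maximal minors of a smaller matrix (this is the content of~\cite{go07b} and is what makes the liaison pattern of $I_m(X)$ transparent), and the candidate initial ideal should satisfy the corresponding basic double link relation built from the same combinatorial data.

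\emph{Setup of the induction.} First I would identify the candidate monomial ideal: since $\sigma$ is diagonal, the initial term of a maximal minor of $X$ using columns $j_1<\dots<j_m$ is $x_{1 j_1} x_{2 j_2}\cdots x_{m j_m}$, so the candidate is $C_X := (x_{1 j_1}\cdots x_{m j_m} \mid 1\le j_1<\dots<j_m\le n)$, a squarefree monomial ideal; we want to show $in(I_m(X)) = C_X$. Since $in(\G_m(X))\subseteq in(I_m(X))$ automatically contains the generators of $C_X$, we always have $C_X \subseteq in(I_m(X))$, and it suffices by the Hilbert function argument of Lemma~\ref{inid} to produce the liaison-theoretic data. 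The induction is on $n$ (the number of columns), the base case $n=m$ being trivial since then $I_m(X)$ is principal and $\G_m(X)$ a single element, a Gr\"obner basis, with squarefree Cohen-Macaulay initial ideal and vertex-decomposable (indeed a simplex minus a vertex) associated complex.

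\emph{The inductive step.} Let $X'$ be the submatrix of $X$ on the first $n-1$ columns. On the ideal side, the classical liaison result gives: with $f = x_{mn}$ (the bottom-right entry, whose initial term in a diagonal order is itself), $I_m(X)$ is obtained from $I_{m-1}(X'')$ — where $X''$ is an appropriate $(m-1)\times(n-1)$ submatrix — via an elementary biliaison of height $1$ on a Cohen-Macaulay ideal $N$ (one of the ``geometrically linked'' ladder determinantal ideals of height one less), all ideals being prime hence saturated and unmixed; I would cite~\cite{go07b} for the precise statement and for the Cohen-Macaulayness and primeness of the relevant ideals. On the monomial side, I would check directly that $C_X = A + x_{mn}\,B$ where $B = C_{X''} = in(I_{m-1}(X''))$ (by induction) and $A = C_N = in(N)$, and verify that this is a basic double link of degree $1$: namely $A$ is Cohen-Macaulay (by induction, as the initial ideal of a smaller ladder determinantal ideal whose complex is vertex decomposable), $\hgt(B) = \hgt(A)+1$, and $A : x_{mn} = A$ (because no generator of $A$ involves $x_{mn}$ — the monomials in $C_N$ live in the sub-ladder not touching the last row and column). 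Splitting the generators of $C_X$ into those maximal minors using column $n$ (which all have $x_{mn}$ as a factor of their initial term, giving the $x_{mn} B$ part) and those not using column $n$ (which give $C_{X'}\subseteq A$, after matching with the ladder $N$) is the combinatorial heart of the verification. Then Lemma~\ref{inid} applies verbatim and yields $in(I_m(X)) = C_X$, hence $\G_m(X)$ is a Gr\"obner basis; it is reduced because the initial terms $x_{1 j_1}\cdots x_{m j_m}$ are squarefree, pairwise non-divisible, and no trailing term of one generator is divisible by the initial term of another (the latter again from squarefreeness and a short direct check).

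\emph{Cohen-Macaulayness and vertex decomposability.} For the last two assertions I would argue simultaneously with the Gr\"obner basis claim, strengthening the induction hypothesis to also record that $C_X$ is squarefree and its associated complex $\Delta_X$ is vertex decomposable (hence $C_X$ Cohen-Macaulay). Squarefreeness is immediate from the description of the generators. For vertex decomposability I would take the vertex $k$ corresponding to the indeterminate $x_{mn}$: from the decomposition $C_X = A + x_{mn} B$ one reads off that $\Delta_X - k = \Delta(A + (x_{mn})) = \Delta(C_N)$ up to the cone-point convention, and $\lk_k(\Delta_X) = \Delta(B) = \Delta(C_{X''})$, both vertex decomposable by induction, and a dimension count (all three ideals being Cohen-Macaulay with $\hgt C_X = \hgt C_N = \hgt C_{X''}+1$, so $\dim = \dim(\Delta_X - k) = \dim\lk_k(\Delta_X)+1$) completes the step; alternatively one may simply invoke that a basic double G-link of degree $1$ of a squarefree monomial ideal with vertex-decomposable complex again has vertex-decomposable complex. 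The main obstacle is purely bookkeeping: making the choice of the intermediate ideals $N$, $X''$ from~\cite{go07b} match up so that the single element $f = x_{mn}$ simultaneously realizes the biliaison on the ideal side and the basic double link on the monomial side, i.e., checking that $in$ commutes with the explicit liaison data in this one step. Once the dictionary between the geometric statement in~\cite{go07b} and the combinatorics of staircase monomial ideals is set up, every verification is a routine Hilbert-function or divisibility check.
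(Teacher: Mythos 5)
Your proposal is correct and follows essentially the same route as the paper's proof: the paper inducts on $mn$ rather than on $n$, takes $X''=Y$ (delete the last row and column), pins down $N=I_m(Z)$ where $Z$ is $X$ without its last column (so $A=in(I_m(Z))$ is covered by the same induction), cites \cite{kl01} for the height-one elementary G-biliaison of $I_m(X)$ from $I_{m-1}(Y)$ on $I_m(Z)$, and concludes exactly as you do via Lemma~\ref{inid} together with the deletion/link description of $\Delta_X$ at the vertex $x_{mn}$. The only blemishes in your write-up are cosmetic and easily repaired: the height relation in your dimension count should read $\hgt C_X=\hgt C_{X''}=\hgt C_N+1$ (consistent with your earlier, correct requirement $\hgt(B)=\hgt(A)+1$), the complex in the square base case $n=m$ is (up to cone points) the boundary of a simplex rather than a simplex minus a vertex, and the degenerate case $m=1$, where $I_{m-1}(X'')$ is not available, should be disposed of separately as the paper does.
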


\begin{proof}
We proceed by induction on $mn=|X|$. If $|X|=1$, then the ideal 
$I_1(X)$ is generated by one indeterminate. Hence
$\G_1(X)$ is a reduced Gr\"obner basis of $I_1(X)$ with
respect to any term ordering and $I_1(X)=in(I_1(X))$ is generated by
indeterminates. The associated simplicial complex $\Delta_X$ is the
empty set, hence it is vertex decomposable.

By induction hypothesis, in order to prove the thesis for a matrix
with $m$ rows and $n$ columns, we may assume that it holds for any
matrix with fewer than $mn$ entries. If $m=1$, then $\G_1(X)$ consists
of indeterminates, hence it is a reduced Gr\"obner basis of $I_1(X)$ with
respect to any term ordering. Moreover, $I_1(X)=in(I_1(X))$ is
generated by indeterminates. The associated simplicial complex
$\Delta_X$ is the empty set, hence it is vertex decomposable.

If $m\geq 2$, let $C\subseteq in(I_m(X))$ 
be the ideal generated by the initial terms of $\G_m(X)$. We claim
that $C=in(I_m(X))$. In fact, let $Z$ be the $m\times(n-1)$ matrix
obtained from $X$ by deleting the last column, and let $Y$ be the
$(m-1)\times(n-1)$ matrix obtained from $Z$ by deleting the last
row. Let $A=in(\G_m(Z))$ be the ideal generated by the initial terms
of the elements of $\G_m(Z)$, and let $B=in(\G_{m-1}(Y))$ be the ideal
generated by the initial terms of the elements of
$\G_{m-1}(Y)$. By the induction hypothesis, $\G_m(Z)$ is a Gr\"obner
basis for $I_m(Z)$ and $\G_{m-1}(Y)$ is a Gr\"obner basis for
$I_{m-1}(Y)$. In other words, $A=in(I_m(Z))$ and $B=in(I_{m-1}(Y))$. Notice that
$$in(\G_m(X))=in(\G_m(Z))\cup x_{mn}in(\G_{m-1}(Y))$$ where $x_{mn}\G$
denotes the set of products $x_{mn}g$ for $g\in\G$. Since
$x_{mn}$ does not appear in $in(\G_m(Z))$, we have $A:x_{mn}=A$. Therefore, 
$$A+x_{mn}B=C\subseteq in(I_m(X))$$ and $C$ is a
Basic Double G-Link of degree 1 of $B$ on $A$. $A$ and $B$ are
squarefree and glicci by induction hypothesis, therefore $C$ is
squarefree and glicci. 
It follows from~\cite[Theorem 3.6]{kl01} that $I_m(X)$ is obtained from
$I_{m-1}(Y)$ via an elementary G-biliaison of height 1 on
$I_m(Z)$. By Lemma~\ref{inid} the maximal minors of $X$ are a
Gr\"obner basis of $I_m(X)$ with respect to $\sigma$, and
$C=in(I_m(X))$. 

Finally, let $\Delta_Z,\Delta_Y,\Delta_X$ be the
simplicial complexes associated to $A,B,C$, respectively. Since
$A+x_{mn}B=C$, $$\Delta_Z=\Delta_X-mn\;\;\;\mbox{and}\;\;\;
\Delta_Y=\lk_{mn}(\Delta_X).$$ 
Since $\Delta_Y$ and $\Delta_Z$ are vertex decomposable by induction
hypothesis, so is $\Delta_X$.
\end{proof}

\begin{rmk}
Theorem~\ref{maxmin} gives in particular a new proof of the fact that
the maximal minors of a generic matrix are a Gr\"obner basis for the
ideal that they generate, with respect to a diagonal or anti-diagonal
term-order. This is a classical result. While previous proofs have a
combinatorial flavor, our proof is completely algebraic, and
independent of all the previous Gr\"obner basis results. 
\end{rmk}

\section{Pfaffian ideals of  ladders}\label{pfaff_sect}

In this section, we study Gr\"obner bases with respect to an
anti-diagonal term-order of ideals generated by pfaffians. We always 
consider pfaffians in a skew-symmetric matrix whose entries are
distinct indeterminates. 

Pfaffians of size $2t$ in a skew-symmetric matrix are known to be a
Gr\"obner basis for the ideal that they generate, as shown by Herzog
and Trung in~\cite{he92} and independently by Kurano in~\cite{ku91}. 
In~\cite{de98}, De Negri generalized this
result to pfaffians of size $2t$ in a symmetric ladder. In this
section, we extend these results to pfaffians of mixed size in a
symmetric ladder. In other words, we consider ideals generated by
pfaffians, whose size is allowed to vary in different regions of the
ladder (see Definition~\ref{idealpf}). In Theorem~\ref{gbpf} we prove
that the pfaffians are a reduced Gr\"obner basis with respect to any
anti-diagonal term-order for the ideal that they generate, and that the
corresponding initial ideal is Cohen-Macaulay and
squarefree. Moreover, the associated simplicial complex is vertex
decomposable. The proof that we give is not a generalization of the
earlier ones. Instead, we use our liaison-theoretic approach and the
linkage results of~\cite{de09}.

In the recent paper~\cite{de09u}, De Negri and Sbarra consider a
different family of ideals generated by pfaffians of mixed size in a
skew-symmetric matrix, namely cogenerated ideals. They are able to
show that the pfaffians are almost never a Gr\"obner basis of the
ideal that they generate with respect to an anti-diagonal
term-order. The family of ideals that they study and the family that we 
consider in this article have a small overlap, which consists of ideals
of pfaffians of size $2t$ in a symmetric ladder, and of ideals
generated by $2t$-pfaffians in the first $m$ rows and columns of the
matrix and $(2t+2)$-pfaffians in the whole matrix. For the ideals in the
overlap, the pfaffians are a Gr\"obner basis for the ideal that they
generate. This follows from Theorem~2.8 of~\cite{de09u}, as well as
from our Theorem~\ref{gbpf}. The results in~\cite{de09u} and those in
this article are obtained independently and by following a completely 
different approach. Nevertheless, we feel that they complement each
other nicely, giving a more complete picture of the behavior of
Gr\"obner bases of pfaffian ideals and of their intrinsic complexity.

Pfaffian ideals of ladders were introduced and studied by De Negri and
the first author in~\cite{de09}. From the point of view of liaison
theory, this is a very natural family to consider. 
In this section, we prove that pfaffians of
mixed size in a ladder of a skew-symmetric matrix are a Gr\"obner
basis with respect to any anti-diagonal term-order for the ideal that
they generate. We start by introducing the relevant definitions and
notation.

Let $X=(x_{ij})$ be an $n\times n$ skew-symmetric matrix of indeterminates.
In other words, the entries
$x_{ij}$ with $i<j$ are indeterminates, $x_{ij}=-x_{ji}$ for
$i>j$, and $x_{ii}=0$ for all $i=1,...,n$.
Let $K[X]=K[x_{ij} \;|\; 1\leq i<j\leq n ]$ be the polynomial ring associated 
to $X$. 

\begin{defn}\label{laddpf}
A {\bf symmetric ladder} $\mathcal L$ of $X$ is a subset of the set
$\X=\{(i,j)\in\NN^2 \;|\; 1\le i,j\le n\}$
with the following properties :
\begin{enumerate}
\item if $(i,j)\in {\mathcal L}$ then $(j,i)\in {\mathcal L}$,
\item if  $i<h,j>k$ and $(i,j),(h,k)\in\mathcal L$, then also
$(i,k),(i,h),(h,j),(j,k)\in\mathcal L$.
\end{enumerate}

We do not assume that the ladder $\LL$ is connected, nor that
$X$ is the smallest skew-symmetric matrix having $\LL$ as ladder. 
It is easy to see that any symmetric ladder can be decomposed as a
union of square subladders 
\begin{equation}\label{decomppf}
\LL=\X_1\cup\ldots\cup \X_s
\end{equation} 
where
 $$\X_k=\{(i,j)\;|\; a_k\le i,j \le b_k\},$$ 
for some  integers $1\leq a_1\leq\ldots\leq a_s\leq n$ and $1\leq
b_1\leq\ldots\leq b_s\leq n$ such that $a_k<b_k$ for all $k$. 
We say that $\LL$ is the ladder with {\bf upper
corners} $(a_1,b_1),\ldots,(a_s,b_s)$, and that $\X_k$ is the
square subladder of $\LL$ with upper outside corner $(a_k,b_k)$. See
Figure~\ref{decomp_fig}. 
\begin{figure}[h!]
\input{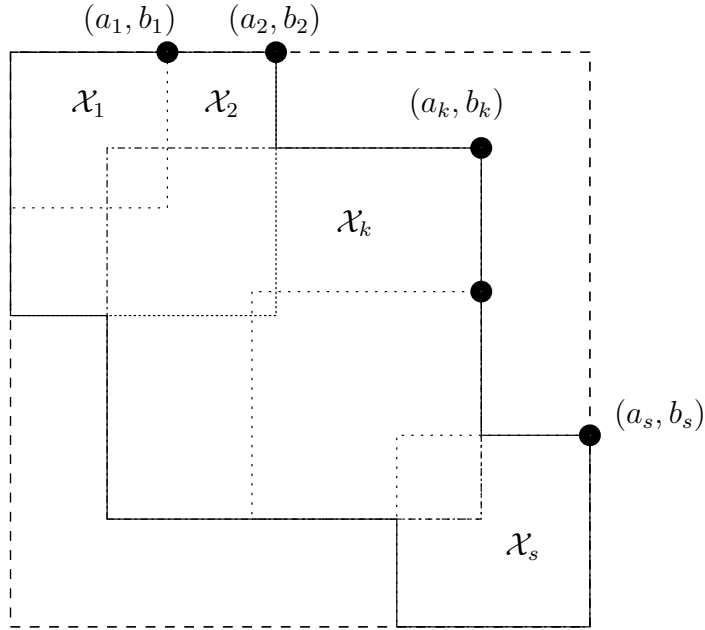}
\caption{An example of a symmetric ladder with its decomposition as a
  union of skew-symmetric matrices and the corresponding upper
  corners.}
\label{decomp_fig}
\end{figure}
We allow two upper corners to have the
same first or second coordinate, however we assume that no two
upper corners coincide. We assume moreover that all upper corners
belong to the border of the ladder, i.e.,
$(a_k-1,b_k+1)\not\in\LL$. Notice that with these conventions a 
ladder does not have a unique decomposition of the form
(\ref{decomppf}). In other words, a symmetric ladder does not
correspond uniquely to a set of upper corners
$(a_1,b_1),\ldots,(a_s,b_s)$. However, any symmetric ladder is 
determined by its upper corners as in (\ref{decomppf}). 
Moreover, the upper corners of $\mathcal L$ determine the
submatrices $\X_k$. We assume that every symmetric ladder comes with its
set of upper corners and the corresponding decomposition as a
union of square submatrices as in (\ref{decomppf}). 
Notice that the set of upper corners as given in our definition
contains all the usual upper outside corners, and may contain some of
the usual upper inside corners, as well as other elements of the
ladder which are not corners of the ladder in the usual sense. 

Given a ladder $\mathcal L$ we set $L=\{x_{ij}\in X\;|\; (i,j)\in
{\mathcal L},\; i<j\}$. If $p$ is a positive integer, we let  
$I_{2p}(L)$ denote the ideal generated by the set of the
$2p$-pfaffians of $X$ which involve only indeterminates of
$L$. In particular $I_{2p}(X)$ is the ideal of $K[X]$ generated by 
the $2p$-pfaffians of $X$.
\end{defn}

\begin{defn}\label{idealpf}
Let $\LL=\X_1\cup\ldots\cup \X_s$ be a symmetric ladder.
Let $X_k=\{x_{i,j}\;|\; (i,j)\in\X_k,\; i<j\}$ for $k=1,\dots,s$.
Fix a vector $t=(t_1,\ldots,t_s)$, $t\in\ZZ_+^s$.
The {\bf ladder pfaffian ideal} $I_{2t}(L)$ is by definition the sum of 
pfaffian ideals $I_{2t_1}(X_1)+\ldots+I_{2t_s}(X_s)$. We also
refer to these ideals as {\bf pfaffian ideals of ladders}. For ease of
notation, we regard all ladder pfaffian ideals as ideals in $K[X]$.
\end{defn}

This family of ideals was introduced and studied in~\cite{de09}. From
the point of view of G-biliaison, this appears to be the right family
to consider. Notice that it does not coincide with the family
of cogenerated pfaffian ideals as defined, e.g., in~\cite{de95}. 

\begin{notat}\label{genspf}
Denote by $\G_{2t_k}(X_k)$ the set of the $2t_k$-pfaffians of $X$
which involve only indeterminates of $X_k$ and
let $$\G_{2t}(L)=\G_{2t_1}(X_1)\cup\ldots\cup\G_{2t_s}(X_s).$$ 
The elements of $\G_{2t}(L)$ are a minimal system of generators of
$I_{2t}(L)$. We sometimes refer to them as ``natural generators''.
\end{notat}

\begin{notat}\label{ladderheight}
For a symmetric ladder $\LL$ with upper corners
$(a_1,b_1),\ldots,(a_s,b_s)$ and $t=(t_1,\ldots,t_s)$, we denote by
$\tilde{\LL}$ the symmetric ladder with upper corners
$(a_1+t_1-1,b_1-t_1+1),\ldots,(a_s+t_s-1,b_s-t_s+1)$. See
Figure~\ref{ladd_hgt_fig}. 
\begin{figure}[h!]
\input{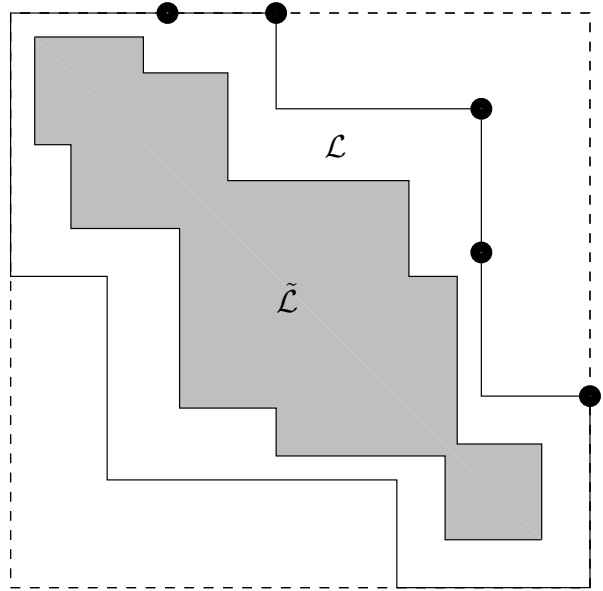}
\caption{An example of a ladder $\LL$ with five upper corners and
  $t=(2,3,4,2,3)$. The corresponding $\tilde{\LL}$ is shaded.}
\label{ladd_hgt_fig}
\end{figure}
\end{notat}

The ladder $\tilde{\LL}$ computes the height of the ideal $I_{2t}(L)$
as follows.

\begin{prop}[Proposition~1.10, \cite{de09}]
Let $\LL$ be the symmetric ladder with  upper corners
$(a_1,b_1), \ldots,$ $(a_s,b_s)$ and $t=(t_1,\ldots,t_s)$. Let 
$\tilde{\LL}$ be as in Notation~\ref{ladderheight}. Then $\tilde{\LL}$
is a symmetric ladder and the height of $I_{2t}(L)$ is equal to the
cardinality of $\{(i,j)\in\tilde{\LL} \;|\; i<j\}$.
\end{prop}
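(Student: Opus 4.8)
The plan is to prove the height formula by induction on the number $s$ of upper corners, using the structure of the ladder as a union of square subladders together with the recursive biliaison description of $I_{2t}(L)$ coming from~\cite{de09}. First I would dispose of the base case $s=1$: here $I_{2t}(L) = I_{2t_1}(X_1)$ is the classical pfaffian ideal of $2t_1$-pfaffians in an $N \times N$ skew-symmetric matrix, where $N = b_1 - a_1 + 1$, and its height is the well-known value $\binom{N - 2t_1 + 2}{2}$. This must be checked to agree with the cardinality of $\{(i,j) \in \tilde\LL : i < j\}$, where $\tilde\LL$ is the single square block with corner $(a_1 + t_1 - 1, b_1 - t_1 + 1)$, i.e.\ an $(N - 2t_1 + 2) \times (N - 2t_1 + 2)$ block; its strict upper-triangular part has exactly $\binom{N-2t_1+2}{2}$ entries, so the base case is a direct count.

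For the inductive step, I would use the G-biliaison construction from~\cite{de09}: the ideal $I_{2t}(L)$ is obtained from a ``smaller'' pfaffian ideal of a ladder (with one fewer upper corner, or with a reduced $t$-vector) via an elementary G-biliaison on a Cohen--Macaulay ideal $N$ of height one less, where $N$ is itself a pfaffian ideal of a ladder. Concretely, biliaison relates the three ideals $I$, $J$, $N$ with $\hgt I = \hgt J = \hgt N + 1$, and since biliaison preserves height, one extracts $\hgt I_{2t}(L)$ from the heights of the two ideals appearing in the previous inductive stage. I would then translate each of those heights via the inductive hypothesis into a count of strictly-upper-triangular cells in the corresponding $\tilde{\phantom{\LL}}$-ladders, and verify the arithmetic identity relating the cell counts: removing or shrinking one upper corner changes $\tilde\LL$ in a controlled way, and the difference in cell counts is exactly one more than the difference for the auxiliary ladder $N$. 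This reduces the whole step to a combinatorial bookkeeping of how the shaded region $\tilde\LL$ changes, as illustrated in Figure~\ref{ladd_hgt_fig}.

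The two points that require genuine care are: (i) confirming that the ladder $\tilde\LL$ defined by the shifted corners $(a_k + t_k - 1, b_k - t_k + 1)$ is actually a well-defined symmetric ladder in the sense of Definition~\ref{laddpf} --- that is, that the shifted corners still satisfy the ladder axioms and in particular that $a_k + t_k - 1 \le b_k - t_k + 1$ (equivalently $t_k \le (b_k - a_k)/2 + 1$), which is exactly the condition that $I_{2t_k}(X_k)$ is a proper ideal; and (ii) setting up the induction so that the auxiliary ideal $N$ arising in the G-biliaison is again of the form $I_{2t'}(L')$ for an honest ladder $\LL'$ with its own valid set of upper corners, so that the inductive hypothesis genuinely applies to it. The main obstacle is (ii): one must identify precisely which ladder and which $t$-vector appear at each biliaison step and check that the shifted ladder $\tilde{\phantom{\LL}}$ of the auxiliary ideal differs from $\tilde\LL$ by exactly one strictly-upper-triangular cell, which is the crux of the height computation. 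Since the statement is quoted as Proposition~1.10 of~\cite{de09}, I would in fact defer the detailed verification to that reference and present here only the inductive skeleton above, noting that all the needed biliaison machinery is recalled in Definitions~\ref{bdl} and~\ref{bilid}.
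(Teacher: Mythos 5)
The paper itself gives no proof of this proposition --- it is quoted verbatim from~\cite{de09} --- so the only internal benchmark is the analogous statement for one-sided ladder determinantal ideals proved in Appendix~\ref{ladd_app}. Measured against that, your plan has one genuine structural problem: circularity. You propose to read off $\hgt I_{2t}(L)$ from the elementary G-biliaison of Theorem~\ref{biliaisonpf} (Theorem~2.3 of~\cite{de09}), ``deferring the detailed verification to that reference.'' But an elementary (G-)biliaison in the sense of Definition~\ref{bilid} already presupposes exactly the data you are trying to prove: its statement requires $\hgt I_{2t}(L)=\hgt I_{2t'}(L')$ and an auxiliary ideal $I_{2u}(M)$ that is Cohen--Macaulay (and $G_0$) of height one less, and in~\cite{de09} these facts are established from the Section~1 results --- Proposition~1.10 among them --- before Theorem~2.3 is proved. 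So quoting the biliaison theorem as a black box is not an admissible proof of Proposition~1.10. The non-circular route, the one Theorem~\ref{laddall} carries out for the determinantal case with the help of Lemma~\ref{laddlocal}, is a simultaneous induction: one proves directly (by exhibiting the multiplication map by a ratio of pfaffians, which needs no prior height information) the isomorphism $I_{2t}(L)/I_{2u}(M)\cong\bigl[(I_{2t'}(L')+I_{2u}(M))/I_{2u}(M)\bigr](-1)$, and establishes primality/$G_0$, Cohen--Macaulayness and the height formula together in one induction.

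Two smaller points. The induction should not be on the number $s$ of upper corners: the recursion never decreases $s$ (the ladder $\LL'$ keeps $s$ corners, with $(a_k,b_k)$ replaced by $(a_k+1,b_k-1)$ and $t_k$ lowered by one, while $\M$ has $s+1$ corners), so one inducts on $|\LL|$, with base case $t=(1,\ldots,1)$, where the formula is immediate since $\tilde{\LL}=\LL$ and the ideal is generated by the indeterminates indexed by $\{(i,j)\in\LL \mid i<j\}$; your one-block base case via the classical formula $\binom{N-2t_1+2}{2}$ is correct but does not fit an induction on $s$. Also the bookkeeping is slightly off: the crucial identity is that $\tilde{\LL'}=\tilde{\LL}$ exactly (the inward shift of the corner compensates $t_k\mapsto t_k-1$), which transfers the count across the biliaison unchanged, while $\tilde{\M}$ is $\tilde{\LL}$ with the single strict-upper cell $(a_k+t_k-1,b_k-t_k+1)$ (and its mirror) removed, matching $\hgt I_{2u}(M)=\hgt I_{2t}(L)-1$; there is no ``difference one more than the difference for $N$'' to verify. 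Your point (i) --- that the shifted corners define an honest symmetric ladder, under the nondegeneracy condition $2t_k\le b_k-a_k+2$ --- is a legitimate item and is handled by conventions on the data $(\LL,t)$ as in~\cite{de09}.
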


The following is the main result of~\cite{de09}. Its proof consists of
an explicit description of the G-biliaison steps, which will be used in the
proof of Theorem~\ref{gbpf}. 

\begin{thm}[Theorem~2.3, \cite{de09}]\label{biliaisonpf}
Any pfaffian ideal of ladders can be obtained from an ideal generated
by indeterminates by a finite sequence of ascending elementary
G-biliaisons. 
\end{thm}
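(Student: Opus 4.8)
The plan is to prove Theorem~\ref{biliaisonpf} by induction, peeling off one box of the ladder at a time, and at each stage exhibiting a single ascending elementary G-biliaison whose source is a pfaffian ideal of a strictly smaller ladder. Concretely, given $\LL = \X_1 \cup \dots \cup \X_s$ with $t = (t_1,\dots,t_s)$, I would fix one of the extremal upper corners, say $(a_s, b_s)$, and consider the effect of ``shrinking'' the last box $\X_s$: replacing it by a smaller box (removing its outermost row and column, or equivalently moving the upper corner inward along the border), while leaving all the other boxes alone. This produces a new ladder $\LL'$ with data $t'$, and the claim is that $I_{2t}(L)$ is obtained from $I_{2t'}(L')$ by an ascending elementary G-biliaison of height~$1$ on a third pfaffian ideal $N$, where $N$ is again an ideal in the same family (the ideal of pfaffians of one larger size in the relevant region, so that $\hgt N = \hgt I_{2t}(L) - 1$). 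The base case of the induction is when every $t_k$ is large enough that $\tilde{\LL}$ degenerates, forcing $I_{2t}(L)$ to be generated by a subset of the indeterminates — this is where the induction terminates.

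The heart of the argument is the construction of the biliaison datum $(N, \ell)$ with $\ell = 1$. For this I would use the standard determinantal/pfaffian identity expressing a maximal pfaffian of a box in terms of pfaffians obtained by Laplace-type expansion along a boundary row, combined with the observation that deleting an outer row and column of a skew-symmetric matrix of indeterminates behaves well with respect to pfaffian ideals. The key module-theoretic statement to verify is the isomorphism $I_{2t}(L)/N \cong [I_{2t'}(L')/N](-1)$ as $R/N$-modules: one exhibits an explicit element $g \in (R/N)_1$ (a suitable entry of the matrix, or a pfaffian of the right size) such that multiplication by $g$ induces this isomorphism. To make this rigorous one needs $N$ to be a prime (or at least Cohen--Macaulay and generically Gorenstein, i.e.\ $G_0$) ideal of the correct height, and one needs the two ideals $I_{2t}(L)$, $I_{2t'}(L')$ to be unmixed of the same height $c$ with $\hgt N = c-1$ — all of which should follow from the structural results about pfaffian ideals of ladders established in~\cite{de09} (primality, Cohen--Macaulayness, the height formula via $\tilde{\LL}$ recalled in Proposition~1.10).

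The main obstacle I anticipate is bookkeeping: verifying that after shrinking box $\X_s$ the resulting combinatorial object $\LL'$ is still a legitimate symmetric ladder in the sense of Definition~\ref{laddpf} (conditions (1) and (2), and the requirement that the new upper corner still lies on the border), and that the new vector $t'$ still lies in $\ZZ_+^s$ (or that a box genuinely disappears, reducing $s$). There are several cases depending on whether the shrunken box collides with its neighbor $\X_{s-1}$, whether $b_s = b_{s-1}$ or $a_s = a_{s-1}$, etc., and one must check that in every case the height drops by exactly the number of indeterminates removed from $\tilde{\LL}$, so that the height hypotheses of the biliaison (Definition~\ref{bilid}) are met and the biliaison is genuinely \emph{ascending} ($\ell = 1 > 0$). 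The clean way to organize this is to phrase the induction on the cardinality of $\tilde{\LL}$ (equivalently on $\hgt I_{2t}(L)$), so that each shrinking step strictly decreases the induction parameter and the terminal case is forced. Once the single-step biliaison is in place, composing the steps gives the sequence of ascending elementary G-biliaisons from an ideal generated by indeterminates, which is exactly the assertion of the theorem; and since all the intermediate ideals are themselves pfaffian ideals of ladders, the explicit description of each step is available for later use in the proof of Theorem~\ref{gbpf}.
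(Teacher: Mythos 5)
Your single biliaison step is essentially the construction actually used in \cite{de09} (and recalled in this paper's proof of Theorem~\ref{gbpf}): pick an upper corner $(a_k,b_k)$, pass to the ladder $\LL'$ with that corner replaced by $(a_k+1,b_k-1)$ and $t_k$ replaced by $t_k-1$, and show that $I_{2t}(L)$ is obtained from $I_{2t'}(L')$ by an ascending elementary G-biliaison of height $1$ on $N=I_{2u}(M)$, where $\M=\LL\setminus\{(a_k,b_k),(b_k,a_k)\}$ and $u$ repeats $t_k$ twice; the isomorphism $I_{2t}(L)/N\cong[I_{2t'}(L')/N](-1)$ is given by multiplication by the ratio of a $2t_k$-pfaffian and a $(2t_k-2)$-pfaffian, not by a matrix entry or a single pfaffian. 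Note also that the paper does not reprove this statement: it is quoted from \cite{de09}, and the closest in-house model for the argument is the appendix proof of Theorem~\ref{laddall} for one-sided ladder determinantal ideals, which runs exactly along these lines.

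Two points in your organization are genuine gaps. First, your proposed induction parameter, $|\tilde{\LL}|$ ``equivalently $\hgt I_{2t}(L)$'', cannot work: by Definition~\ref{bilid} the two ideals in an elementary biliaison have the \emph{same} height, and indeed the shrunken data satisfy $\tilde{\LL'}=\tilde{\LL}$, since the new corner contributes $(a_k+1+(t_k-1)-1,\,b_k-1-(t_k-1)+1)=(a_k+t_k-1,\,b_k-t_k+1)$. So this parameter is constant along the whole sequence and your induction never terminates. The correct parameter is the size of the ladder $|\LL|$ (or, say, $\sum_k t_k$), which strictly drops because $\LL'\subsetneq\LL$ and $\M\subsetneq\LL$; correspondingly the terminal case is $t=(1,\ldots,1)$, where the $2$-pfaffians are exactly the indeterminates of $L$ --- not ``every $t_k$ large enough that $\tilde{\LL}$ degenerates''. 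Second, you cannot shrink an arbitrary extremal box: if $t_s=1$ the decrement is impossible, and the actual proof chooses $k$ with $t_k=\max\{t_1,\ldots,t_s\}\geq 2$, which is precisely what keeps the new corner/vector data a legitimate ladder pfaffian ideal and settles most of the case analysis you flag as the main obstacle. Finally, quoting primality, Cohen--Macaulayness and the height formula for the whole mixed family from \cite{de09} to justify the hypotheses on $N$ risks circularity, since in \cite{de09} those facts are proved by the same induction; the clean fix, as in Theorem~\ref{laddall}, is to carry Cohen--Macaulayness, the $G_0$ property of the linking ideal, and the height formula through the induction together with the biliaison statement.
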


By combining Lemma~\ref{inid} and Theorem~\ref{biliaisonpf}, we
prove that the pfaffians are a Gr\"obner basis of the ideal that they
generate with respect to any anti-diagonal term-order. 

\begin{thm}\label{gbpf}
Let $X=(x_{ij})$ be an $n\times n$ skew-symmetric matrix of
indeterminates. Let $\LL=\X_1\cup\ldots\cup\X_s$ and
$t=(t_1,\ldots,t_s)$. Let $I_{2t}(L)\subset K[X]$ be the
corresponding ladder pfaffian ideal and let $\G_{2t}(L)$ be the set of
pfaffians that generate it. Let $\sigma$ be any anti-diagonal
term-order. Then $\G_{2t}(L)$ is a reduced Gr\"obner 
basis of $I_{2t}(L)$ with respect to $\sigma$. Moreover, the initial
ideal of $I_{2t}(L)$ with respect to $\sigma$ is squarefree, and the
associated simplicial complex is vertex decomposable. In particular,
the initial ideal of $I_{2t}(L)$ is Cohen-Macaulay.
\end{thm}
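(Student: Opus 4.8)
The plan is to mimic the proof of Theorem~\ref{maxmin}, using induction combined with Lemma~\ref{inid}. First I would set up an induction on the size of the ladder, measured by the cardinality $|\LL|$ together with the size vector $t$; more precisely, I expect the induction to follow exactly the structure of the G-biliaison steps from Theorem~\ref{biliaisonpf} (Theorem~2.3 of~\cite{de09}), whose proof gives an explicit recursion. The base case is when the ladder pfaffian ideal is already generated by indeterminates: then $\G_{2t}(L)$ is trivially a reduced Gr\"obner basis, the initial ideal equals the ideal and is squarefree, and the associated simplicial complex is a (cone over the) empty set, hence vertex decomposable.

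For the inductive step, I would take the explicit elementary G-biliaison provided by~\cite{de09} expressing $J = I_{2t}(L)$ via an ascending elementary G-biliaison of height $\ell = 1$ on an ideal $N$ from the same family (or an ideal generated by indeterminates), starting from another ideal $I$ in the family which is ``smaller'' in the induction order. By the inductive hypothesis applied to $I$ and $N$, their natural generators are reduced Gr\"obner bases, so $B := in(I)$ and $A := in(N)$ are known squarefree Cohen-Macaulay monomial ideals, with $A$ obtained from an ideal of indeterminates by Basic Double G-links of degree $1$, and the associated complexes vertex decomposable. Exactly as in the proof of Theorem~\ref{maxmin}, one checks — by inspecting anti-diagonal leading terms of the pfaffians — that $in(\G_{2t}(L))$ decomposes as $in(\G \text{ for } N) \cup (\text{multiplier}) \cdot in(\G \text{ for } I)$, so that $C := (in(\G_{2t}(L)))$ satisfies $C = A + fB$ for a suitable monomial $f$ (a single indeterminate, coming from the entry that ``enters'' in this biliaison step) with $A : f = A$, i.e.\ $C$ is a Basic Double Link of degree $1$ of $B$ on $A$, and $C \subseteq in(J)$. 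Lemma~\ref{inid} then gives $C = in(J)$, so the pfaffians are a Gr\"obner basis; reducedness follows because each natural generator's leading term is squarefree and not divisible by the leading term of another (the ladder/mixed-size combinatorics ensures this). Squarefreeness of $in(J) = C = A + fB$ is inherited from $A$, $B$ and the fact that $f$ is an indeterminate not dividing the generators of $A$; the Basic Double G-link chain for $C$ is obtained by appending this step to the one for $A$, so $C$ is glicci. Finally, as in Theorem~\ref{maxmin}, writing $\Delta_C, \Delta_A, \Delta_B$ for the associated complexes and using $C = A + fB$ with $f = x_{uv}$, we get $\Delta_A = \Delta_C - uv$ and $\Delta_B = \lk_{uv}(\Delta_C)$, both vertex decomposable by induction, so $\Delta_C$ is vertex decomposable (one must also check the purity and dimension conditions, which follow from $\hgt B = \hgt A + 1$ and Cohen-Macaulayness).

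The main obstacle I anticipate is purely bookkeeping: extracting from the proof of Theorem~2.3 of~\cite{de09} the precise combinatorial description of the biliaison step — which ideal $I$ in the family is the ``predecessor'', which ideal $N$ plays the role of the link, and crucially which indeterminate $f = x_{uv}$ is the Basic Double Link multiplier — and then verifying the key identity
\[
in(\G_{2t}(L)) = in(\G \text{ for } N) \ \cup\ x_{uv}\cdot in(\G \text{ for } I)
\]
at the level of anti-diagonal leading terms of pfaffians. This requires knowing that the chosen anti-diagonal term order interacts with the G-biliaison of~\cite{de09} in a compatible way: that the leading term of each pfaffian in $\G_{2t}(L)$ either avoids $x_{uv}$ and is a leading term of a generator of $N$, or is $x_{uv}$ times a leading term of a generator of $I$. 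One also needs that $\G_{2t}(L)$ is genuinely a minimal generating set with distinct, squarefree, pairwise non-divisible leading terms to conclude the Gr\"obner basis is \emph{reduced}; this is where the ``mixed size'' hypothesis must be handled carefully, since pfaffians of different sizes sit in overlapping regions of the ladder. Once that combinatorial compatibility is pinned down, the rest is a direct transcription of the argument in Section~\ref{ex_sect}, with Lemma~\ref{inid} doing the Hilbert-function work and~\cite[Theorem~3.6]{kl01} (or the biliaison directly from~\cite{de09}) supplying the biliaison on the original ideals.
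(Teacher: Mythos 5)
Your proposal is correct and follows essentially the same route as the paper: induction on the size of the ladder, the explicit G-biliaison step from~\cite{de09} (the paper takes $k$ with $t_k=\max\{t_i\}\geq 2$, passes to the ladder with corner $(a_k+1,b_k-1)$ and $t_k-1$, links on the ladder with $(a_k,b_k)$ removed, and the multiplier is $f=x_{a_k,b_k}$), the identity $in(\G_{2t}(L))=in(\G_{2u}(M))\cup x_{a_k,b_k}\,in(\G_{2t'}(L'))$ making $C$ a Basic Double G-Link of degree 1, Lemma~\ref{inid} for the Hilbert-function conclusion, and the link/deletion argument for vertex decomposability. The ``bookkeeping'' you flag is exactly what the paper supplies, so no genuinely different ideas are involved.
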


\begin{proof}
Let $$I_{2t}(L)=I_{2t_1}(X_1)+\cdots+I_{2t_s}(X_s)\subset K[X]$$ be
the pfaffian ideal of the ladder $\LL$ with $t=(t_1,\ldots,t_s)$ and
upper corners $(a_1,b_1),\ldots,$ $(a_s,b_s)$. Let $\G_{2t}(L)$ be the
set of pfaffians that generate $I_{2t}(L)$. We proceed by induction on
$\ell=|\LL|$.

If $\ell=|\LL|=1$, then $\LL=\tilde{\LL}$ and $t=1$. $\G_{2t}(L)$
consists only of one indeterminate, in particular it is a reduced
Gr\"obner basis with respect to any term-order $\sigma$ of the ideal
that it generates. Since $in(I_2(L))=I_2(L)$ is generated by
indeterminates, it is squarefree and Cohen-Macaulay, and the
associated simplicial complex is the empty set.

We assume that the thesis holds for ideals associated to ladders $\N$
with $|\N|<\ell$ and we prove it for an ideal $I_{2t}(L)$ associated
to a ladder $\LL$ with $|\LL|=\ell$. If
$t_1=\ldots=t_s=1$, then $\G_{2t}(L)$ consists only of
indeterminates. In particular, it is a reduced Gr\"obner basis of the
ideal that it generates, with respect to any term-order
$\sigma$. Moreover, $in(I_2(L))=I_2(L)$ is generated by
indeterminates, hence it is squarefree and Cohen-Macaulay. The
associated simplicial complex is the empty set. Otherwise, let
$k\in\{1,\ldots,s\}$ such that 
$t_k=\max\{t_1,\ldots,t_s\}\geq 2$. Let $\mathcal L'$ be the ladder
with upper corners $$(a_1,b_1),\ldots,(a_{k-1},b_{k-1}),(a_k+1,b_k-1), 
(a_{k+1},b_{k+1}),\ldots,(a_s,b_s)$$
and let $t'=(t_1,\ldots,t_{k-1},t_k-1,t_{k+1},\ldots,t_s)$. 
Let $I_{2t'}(L')\subset K[X]$ be the associated ladder pfaffian
ideal. Let $\G_{2t'}(L')$ be the set of pfaffians which minimally
generate $I_{2t'}(L')$. Since $|\LL'|<\ell$, by
induction hypothesis $\G_{2t'}(L')$ is a reduced Gr\"obner
basis of $I_{2t'}(L')$ with respect to any anti-diagonal
term-order. Hence $$in(I_{2t'}(L'))=(in(\G_{2t'}(L'))).$$
Let $\mathcal M$ be the ladder obtained from $\LL$ by removing 
$(a_k,b_k)$ and $(b_k,a_k)$. ${\mathcal M}$ has upper corners
$$(a_1,b_1),\ldots,(a_{k-1},b_{k-1}),(a_k,b_k-1),
(a_k+1,b_k),(a_{k+1},b_{k+1}),\ldots,(a_s,b_s)$$ and
$u=(t_1,\dots,t_{k-1},t_k,t_k,t_{k+1},\dots,t_s)$. 
Let $I_{2u}(M)\subset K[X]$ be the associated ladder pfaffian
ideal. Let $\G_{2u}(M)$ be the set of pfaffians which minimally
generate $I_{2u}(M)$. Since $|\M|=\ell-1$, by induction hypothesis
$\G_{2u}(M)$ is a reduced Gr\"obner basis of $I_{2u}(M)$ with respect
to any anti-diagonal term-order,
and $$in(I_{2u}(M))=(in(\G_{2u}(M))).$$

It follows from~\cite{de09}, Theorem~2.3 that $I_{2t}(L)$ is
obtained from $I_{2t'}(L')$ via an ascending elementary G-biliaison of
height $1$ on $I_{2u}(M)$. The ideals $in(I_{2t'}(L'))$ and
$in(I_{2u}(M))$ are Cohen-Macaulay by induction hypothesis. Moreover
$$in(\G_{2t}(L))=in(\G_{2u}(M))\cup x_{a_k,b_k}in(\G_{2t'}(L')),$$
where $x_{a,b}\G$ denotes the set of products $x_{a,b}g$ for $g\in\G$. Since
$x_{a_k,b_k}$ does not appear in $in(\G_{2u}(M))$, it does not divide zero
modulo the ideal $in(I_{2u}(M))$. Therefore, 
\begin{equation}\label{cpx_pfaff}
I:=(in(\G_{2t}(L)))=in(I_{2u}(M))+x_{a_k,b_k}in(I_{2t'}(L'))
\subseteq in(I_{2t}(L))
\end{equation}
and $I$ is a Basic Double G-Link of degree 1 of $in(I_{2t'}(L'))$ on
$in(I_{2u}(M))$. Therefore $I$ is a squarefree Cohen-Macaulay
ideal. By Lemma~\ref{inid} $I=in(I_{2t}(L))$, hence $\G_{2t}(L)$ is a
Gr\"obner basis of $I_{2t}(L)$ with respect to any anti-diagonal
term-order.

Let $\Delta$ be the simplicial complex associated to
$in(I_{2t}(L))$. By (\ref{cpx_pfaff}) the simplicial complexes
associated to $in(I_{2t'}(L')$ and $in(I_{2u}(M))$ are
$\lk_{(a_k,b_k)}(\Delta)$ and $\Delta-(a_k,b_k)$,
respectively. $\Delta$ is vertex decomposable, since
$\lk_{(a_k,b_k)}(\Delta)$ and $\Delta-(a_k,b_k)$ are by induction
hypothesis. 
\end{proof}

\begin{rmks}\label{blahblah}
\begin{enumerate}
\item From the proof of the theorem it also follows that $I_{2t}(L)$
  is obtained from an ideal generated by indeterminates via a sequence
  of degree 1 Basic Double G-links, which only involve squarefree
  monomial ideals. Hence in particular it is glicci. 
  Since any vertex decomposable complex is 
  shellable, it also follows that the associated simplicial
  complex is shellable (see Section~5 of~\cite{na08} for a summary 
  of the implications among different properties of simplicial
  complexes, such as vertex decomposability, shellability,
  Cohen-Macaulayness, etc).
\item The proof of Theorem~\ref{gbpf} given above does not
constitute a new proof of the fact that the $2t$-pfaffians in a matrix
or in a symmetric ladder are a Gr\"obner basis with 
respect to any anti-diagonal term-order for the ideal that they
generate. In fact, our proof is based on Theorem~2.3 in~\cite{de09},
which in turn relies on the fact that pfaffians all of the same size
in a ladder of a skew-symmetric matrix generate a prime ideal. Primality
of the ideal is classically deduced from the fact that the pfaffians
are a Gr\"obner basis. So we are extending (and not
reproving) the results in~\cite{he92}, \cite{ku91}, and~\cite{de98}.
\end{enumerate}
\end{rmks}

\section{Symmetric mixed ladder determinantal ideals}\label{symm_sect}

In this section, we study ideals generated by minors contained in a
ladder of a generic symmetric matrix. We show that the minors are 
Gr\"obner bases for the ideals that they generate, with respect to a
diagonal term-order. We also show that the corresponding initial ideal
is glicci (hence Cohen-Macaulay) and squarefree, and that the
associated simplicial complex is vertex decomposable. 

Cogenerated ideals of minors in a symmetric matrix of indeterminates
or a symmetric ladder thereof were studied by Conca in~\cite{co94c}
and~\cite{co94a}. We refer to~\cite{co94c} and~\cite{co94a} for the
definition of cogenerated determinantal ideals in a symmetric
matrix. In those articles Conca proved among other things that the
natural generators of cogenerated ideals of ladders of a symmetric
matrix are a Gr\"obner bases with respect to any diagonal term-order. 
In this section, we study the family of symmetric mixed ladder
determinantal ideals. This family strictly contains the family of
cogenerated ideals. Symmetric mixed ladder determinantal ideals have been
introduced and studied by the first author in~\cite{go10}. This is a
very natural family to study, from the point of view of liaison theory. 
In this paper we extend the result of Conca and prove that the natural
generators of symmetric mixed ladder determinantal ideals are a
Gr\"obner bases with respect to any diagonal term-order. 
 
Let $X=(x_{ij})$ be an $n\times n$ symmetric
matrix of indeterminates. In other words, the entries $x_{ij}$ with
$i\leq j$ are distinct indeterminates, and $x_{ij}=x_{ji}$ for
$i>j$. Let $K[X]=K[x_{ij}\mid 1\leq i\leq j\leq n]$ be the
polynomial ring associated to the matrix $X$. In the sequel, we study
ideals generated by the minors contained in a ladder of a generic
symmetric matrix. Throughout the section, we let 
$$\X=\{(i,j)\mid 1\leq i,j\leq n\}.$$ 
We let $\LL$ be a symmetric ladder (see Definition~\ref{laddpf}). We
can restrict ourselves to 
symmetric ladders without loss of generality, since the ideal
generated by the minors in a ladder of a symmetric matrix coincides
with the ideal generated by the minors in the smallest symmetric
ladder containing it. We do not assume that $\LL$ is connected, nor that
that $X$ is the smallest symmetric matrix having $\LL$ as a ladder. 
Let $$\X^+=\{(i,j)\in\X\mid 1\leq i\leq j\leq n\}\;\;\;\mbox{and}\;\;\; 
\LL^+=\LL\cap\X^+.$$
Since $\LL$ is symmetric, $\LL^+$ determines $\LL$ and vice versa. We
will abuse terminology and call $\LL^+$ a ladder.
Observe that $\LL^+$ can be written as
$$\LL^+=\{(i,j)\in\X^+\mid i\leq c_l \mbox{ or } j\leq d_l \mbox{ for } 
l=1,\ldots,r\; \mbox{ and }$$ $$ i\geq a_l \mbox{ or } j\geq b_l 
\mbox{ for } l=1,\ldots,u \}$$ 
for some  integers $1\leq a_1<\ldots<a_u\leq n$, $n\geq b_1>\ldots>b_u\geq 1$, 
$1\leq c_1<\ldots<c_r\leq n$, and $n\geq d_1>\ldots>d_r\geq 1$, with
$a_l\leq b_l$ for $l=1,\ldots,u$ and $c_l\leq d_l$ for $l=1,\ldots,r$.

The points $(a_1,b_2),\ldots,(a_{u-1},b_u)$ are the {\bf upper outside
corners} of the ladder, $(a_1,b_1),\ldots,(a_u,b_u)$ are the
{\bf upper inside corners}, $(c_2,d_1),\ldots,(c_r,d_{r-1})$ the {\bf
lower outside corners}, and $(c_1,d_1),\ldots,(c_r,d_r)$ the {\bf lower inside
corners}. If $a_u\neq b_u$, then $(a_u,a_u)$ is an upper outside corner
and we set $b_{u+1}=a_u$. Similarly, if $c_r\neq d_r$ then $(d_r,d_r)$
is a lower outside corner, and we set $c_{r+1}=d_r$. A ladder has at least
one upper and one lower outside 
corner. Moreover, $(a_1,b_1)=(c_1,d_1)$ is both an upper and a lower
inside corner. See Figure~\ref{corners}. 
\begin{figure}[h!]
\begin{center}
\input{figs/symm1.pstex_t}
\caption{An example of ladder with tagged lower and upper corners.}
\label{corners}
\end{center}
\end{figure}
The {\bf lower border} of $\LL^+$
consists of the elements $(c,d)$ of $\LL^+$ such that either $c_l\leq
c\leq c_{l+1}$ and $d=d_l$, or $c=c_l$ and $d_l\leq d\leq d_{l-1}$ for
some $l$. See Figure~\ref{lowerborder}. 
\begin{figure}[h!]
\begin{center}
\input{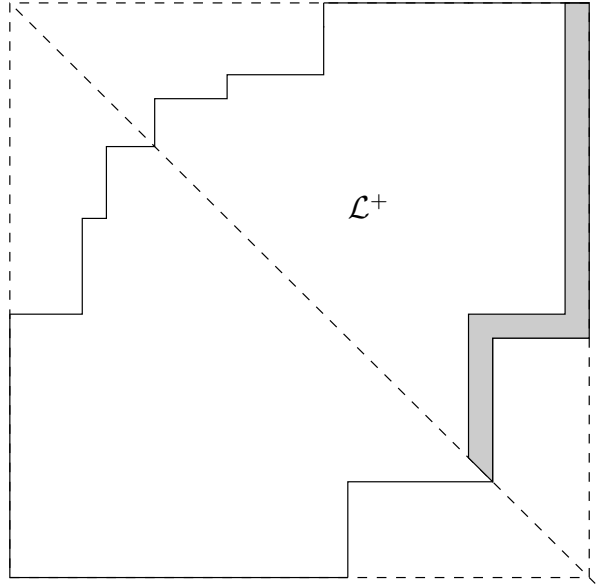}
\caption{The lower border of the same ladder.}
\label{lowerborder}
\end{center}
\end{figure}
All the corners belong to $\LL^+$. In fact, the ladder $\LL^+$
corresponds to its set of lower and upper outside (or equivalently
lower and upper inside) corners. The lower corners
of a ladder belong to its lower border.

Given a ladder $\LL$ we set $L=\{x_{ij}\in X\mid (i,j)\in\LL^+\}$. For
$t$ a positive integer
we let $I_t(L)$ denote the ideal generated by the set of the
$t$-minors of $X$ which involve only indeterminates of
$L$. In particular $I_t(X)$ is the ideal of $K[X]$ generated by 
the minors of $X$ of size $t\times t$.

\begin{notat}\label{decompsym}
Let $\LL^+$ be a ladder. For $(v,w)\in\LL^+$ let 
$$\LL^+_{(v,w)}=\{(i,j)\in\LL^+\mid i\leq v,\; j\leq w\},
\;\;\;\;\; L_{(v,w)}=\{x_{ij}\in X\mid (i,j)\in\LL^+_{(v,w)}\}.$$
Notice that $\LL^+_{(v,w)}$ is a ladder and 
$$\LL^+=\bigcup_{(v,w)\in\U}\LL^+_{(v,w)}$$
where $\U$ denotes the set of lower outside corners of $\LL^+$.
\begin{figure}[h!]
\begin{center}
\input{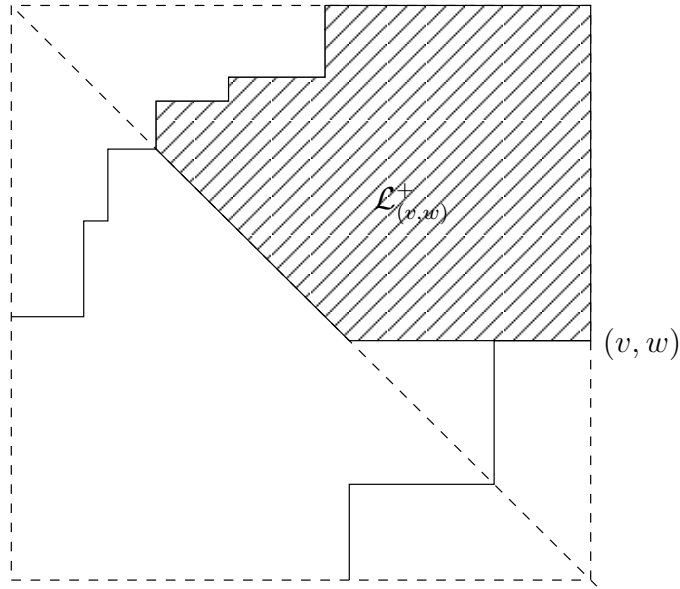}
\caption{The ladder $\LL^+$ with a shaded subladder $\LL^+_{(v,w)}$.}
\label{region}
\end{center}
\end{figure}
\end{notat}

\begin{defn}\label{ldi}
Let $\{(v_1,w_1),\ldots,(v_s,w_s)\}$ be a subset of the
lower border of $\LL^+$ which contains all the lower outside
corners. We order them so that $1\leq v_1\leq\ldots\leq v_s\leq n$
and $n\geq w_1\geq\ldots\geq w_s\geq 1$. Let
$t=(t_1,\ldots,t_s)\in\ZZ_+^s$. Denote $\LL^+_{(v_k,w_k)}$ by 
$\LL^+_k$, and $L_{(v_k,w_k)}$ by $L_k$. 
The ideal $$I_t(L)=I_{t_1}(L_1)+\ldots+I_{t_s}(L_s)\subset
K[X]$$ is a {\bf symmetric mixed ladder determinantal ideal}. 
Denote $I_{(t,\ldots,t)}(L)$ by $I_t(L)$.
We call $(v_1,w_1),\ldots,(v_s,w_s)$ {\bf distinguished points} of
$\LL^+$.
\end{defn}
If $t=(t,\ldots,t)$, then $I_t(L)$ is the ideal generated by the
$t$-minors of $X$ that involve only indeterminates from $L$. These
ideals have been classically studied (see, e.g., \cite{co94c}, \cite{co94a},
\cite{co94b}). It is not hard to show
(see~\cite{go10}, Examples~1.5) that the family of symmetric mixed
ladder determinantal ideals contains the family of cogenerated ideals
in a ladder of a symmetric matrix, as defined in~\cite{co94a}. 

\begin{notat}\label{genssym}
Denote by $\G_{t_k}(L_k)$ the set of the $t_k$-minors of $X$
which involve only indeterminates of $L_k$ and
let $$\G_t(L)=\G_{t_1}(L_1)\cup\ldots\cup\G_{t_s}(L_s).$$ 
The elements of $\G_t(L)$ are a minimal system of generators of
$I_t(L)$. We sometimes refer to them as ``natural generators''.
\end{notat}

\begin{notat}
Let $\LL$ be a ladder with distinguished points
$(v_1,w_1),\ldots,(v_s,w_s)$. We denote
by $$\tilde{\LL}^+=\{(i,j)\in\LL^+\mid i\leq v_{k-1}-t_{k-1}+1 \mbox{
  or } j\leq w_k-t_k+1 \mbox{ for } k=2,\ldots,s,$$ $$j\leq
w_1-t_1+1,\; i\leq v_s-t_s+1\}$$ and by
$$\tilde{\LL}=\tilde{\LL}^+\cup\{(j,i)\mid (i,j)\in\tilde{\LL}^+\}.$$
See Figure~\ref{lplus}.

\begin{figure}[h!]
\begin{center}
\input{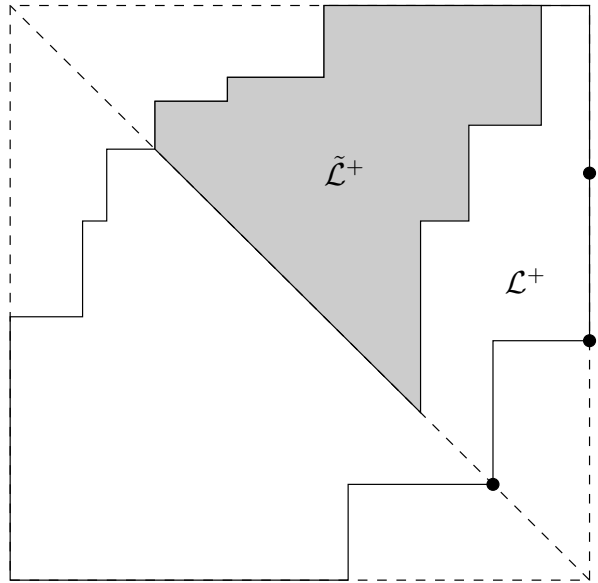}
\caption{An example of $\LL^+$ with three distinguished points and
  $t=(3,6,4)$. The corresponding $\tilde{\LL}^+$ is shaded.}
\label{lplus}
\end{center}
\end{figure}
\end{notat}

The ladder $\tilde{\LL}$ computes the height of $I_t(L)$ as follows.

\begin{prop}[\cite{go10}, Proposition~1.8]
Let $\LL$ be a ladder with distinguished points
$(v_1,w_1),\ldots,(v_s,w_s)$ and let $\tilde{\LL}$ and $\tilde{\LL}^+$
be as above. Then $\tilde{\LL}$ is a symmetric ladder and 
$$\hgt I_t(L)=|\tilde{\LL}^+|.$$
\end{prop}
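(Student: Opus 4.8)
The plan is to prove the two assertions separately --- that $\tilde{\LL}$ is a symmetric ladder, and that $\hgt I_t(L)=|\tilde{\LL}^+|$ --- the latter by identifying the variety $V(I_t(L))$ with an irreducible variety of the predicted codimension, with a fallback route that proves the inequalities $\hgt I_t(L)\le|\tilde{\LL}^+|$ and $\hgt I_t(L)\ge|\tilde{\LL}^+|$ one at a time.

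First I would check that $\tilde{\LL}$ is a symmetric ladder. Since $\tilde{\LL}=\tilde{\LL}^+\cup\{(j,i)\mid (i,j)\in\tilde{\LL}^+\}$, property~(1) of Definition~\ref{laddpf} holds by construction, so only the box property~(2) needs to be checked, and it suffices to check it on $\tilde{\LL}^+$. This is not merely a matter of intersecting half-plane regions, since a region of the form ``$i\le a$ or $j\le b$'' is not itself closed under the box operation; the key point is that the shifts $v_{k-1}-t_{k-1}+1$ and $w_k-t_k+1$ appearing at consecutive distinguished points, together with the fact that the $(v_k,w_k)$ lie on the lower border of $\LL^+$ in the order $v_1\le\cdots\le v_s$, $w_1\ge\cdots\ge w_s$, force the successive disjunctions defining $\tilde{\LL}^+$ to line up into a bona fide staircase once one intersects with $\LL^+$. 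Carrying this out is elementary but needs some case analysis, and it uses the nondegeneracy constraints on $t$ implicit in the definition of a symmetric mixed ladder determinantal ideal.

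For the height formula I would argue geometrically, passing to the subring $K[L]=K[x_{ij}\mid (i,j)\in\LL^+]$ (which contains a generating set of $I_t(L)$, so that heights are unchanged). A closed point of $V(I_t(L))$ is then a symmetric matrix $M$ with entries indexed by $\LL^+$ whose (staircase) submatrix $M_k$ on $L_k$ has $\operatorname{rank} M_k\le t_k-1$ for every $k$. I would parametrize the generic such $M$ by gluing products $B_kB_k^{T}$ of generic rectangular matrices of the appropriate sizes along the overlaps of the regions $\LL^+_k$, then check that this map surjects onto $V(I_t(L))$ --- i.e.\ that every matrix satisfying the pointwise rank bounds does carry a global rank structure compatible with the ladder, which is where the ladder hypotheses are really used --- and then count the dimension of a generic fibre. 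Surjectivity presents $V(I_t(L))$ as the image of an irreducible affine space, hence as irreducible, and the fibre count shows that $V(I_t(L))$ has codimension $|\tilde{\LL}^+|$, which is the asserted formula. If one prefers to avoid the surjectivity input, the parametrization still produces a prime $\mathfrak p\supseteq I_t(L)$ of height $|\tilde{\LL}^+|$, whence $\hgt I_t(L)\le|\tilde{\LL}^+|$, and the reverse inequality follows by induction on $\sum_k(t_k-1)$: the base case $t=(1,\dots,1)$ is clear since then $I_t(L)$ is generated by the indeterminates of $L$ and $\tilde{\LL}^+=\LL^+$, while in the inductive step, localizing at a suitably chosen corner entry $x_{pq}$ of $\LL$ and performing the standard row and column operations identifies the localization $I_t(L)_{x_{pq}}$ with an extension of a symmetric mixed ladder determinantal ideal $I_{t'}(N)$ for the ladder $\N$ obtained by deleting the rows and columns with indices $p$ and $q$, with some of the $t_k$ lowered; primeness of the pure symmetric ladder determinantal ideals (Conca, \cite{co94c}, \cite{co94a}) and, where needed, of $I_t(L)$ itself (\cite{go10}) then shows that $x_{pq}$ lies in no minimal prime of $I_t(L)$ of minimal height, so $\hgt I_t(L)=\hgt I_t(L)_{x_{pq}}=\hgt I_{t'}(N)=|\tilde{\N}^+|=|\tilde{\LL}^+|$, the last two equalities coming from the inductive hypothesis and a combinatorial check.

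I expect the main obstacle to be the ``codimension at least as large as expected'' direction, that is, the bound $\hgt I_t(L)\ge|\tilde{\LL}^+|$. In the geometric approach this is precisely the surjectivity of the gluing parametrization onto $V(I_t(L))$ --- equivalently, that the pointwise rank conditions already impose the global staircase rank structure --- together with irreducibility of $V(I_t(L))$. In the inductive approach it is the coordination of the localization step with the primeness input and with the precise bookkeeping of how $\tilde{\LL}^+$ transforms when a corner is peeled off. By contrast, the upper bound $\hgt I_t(L)\le|\tilde{\LL}^+|$ coming from the parametrization, and the verification that $\tilde{\LL}$ is a symmetric ladder, are routine, if notation-heavy.
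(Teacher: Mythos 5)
Your plan leaves the genuinely hard half of the statement unproved, and you have located it correctly yourself: the inequality $\hgt I_t(L)\ge|\tilde{\LL}^+|$. In the geometric route this is the surjectivity of your glued parametrization together with its dimension count, and neither is a formality. On an overlap $\LL^+_k\cap\LL^+_{k+1}$ the compatibility conditions $(B_kB_k^T)_{ij}=(B_{k+1}B_{k+1}^T)_{ij}$ are a nontrivial system of equations, so it is not even clear that your parametrizing object is an irreducible variety of the predicted dimension; and the claim that the pointwise rank bounds on the regions $L_k$ force a compatible global factorization is essentially the full strength of irreducibility of mixed symmetric ladder loci, for which you offer no argument. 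There is also a characteristic problem: over an algebraically closed field of characteristic $2$ the map $B\mapsto BB^T$ does not dominate the locus of symmetric matrices of bounded rank (alternating matrices of small rank are not of this form), while the proposition is characteristic-free, so even the ``routine'' upper bound $\hgt I_t(L)\le|\tilde{\LL}^+|$ is not secured by this parametrization as stated. The part about $\tilde{\LL}$ being a symmetric ladder is indeed an elementary check and is fine.

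The fallback route is circular as organized. You invoke primality of $I_t(L)$ from~\cite{go10}, but the statement you are proving is Proposition~1.8 of that same paper, and there --- exactly as in the analogous one-sided argument reproduced in Appendix~\ref{ladd_app} of the present paper (Theorem~\ref{laddall} with Lemma~\ref{laddlocal}) --- height, Cohen--Macaulayness and primality are established by one intertwined induction, with the height/unmixedness formula used as an input to the localization proof of primality, not deduced from it. The actual argument is much softer than your geometric plan: one shows that $I_t(L)$ is obtained from $I_{t'}(L')$ (one $t_k$ lowered, smaller ladder) by an elementary G-biliaison of height $1$ on $I_u(M)$; since biliaison preserves height and combinatorially $\tilde{\LL}=\tilde{\LL}'$, the induction closes, the base case $t=(1,\ldots,1)$ being the ideal generated by the indeterminates indexed by $\tilde{\LL}^+=\LL^+$. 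Finally, your symmetric localization step is underspecified at exactly the delicate point: inverting an off-diagonal entry of a symmetric matrix only permits congruence operations, the resulting drop in the rank conditions and in the ladder is not simply ``delete rows and columns $p$ and $q$ and lower some $t_k$ by one,'' and the bookkeeping showing $|\tilde{\N}^+|=|\tilde{\LL}^+|$ for the reduced data is precisely what would need to be carried out.
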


The result about Gr\"obner bases will follow by combining the next
theorem with Lemma~\ref{inid}. 

\begin{thm}[\cite{go10}, Theorem~2.4]\label{biliaisonsym}
Any symmetric mixed ladder determinantal ideal can be obtained from an ideal
generated by indeterminates by a finite sequence of ascending
elementary G-biliaisons.
\end{thm}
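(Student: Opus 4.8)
The final statement, Theorem~\ref{biliaisonsym}, is quoted from \cite{go10}; I sketch how its proof runs (parallel to the pfaffian case, Theorem~\ref{biliaisonpf} and \cite{de09}) and how the paper then uses it. The plan is an induction on a complexity measure of the symmetric mixed ladder determinantal ideal $I_t(L)$ — say $\sum_{k=1}^{s}(t_k-1)$, equivalently the codimension $\hgt I_t(L)=|\tilde{\LL}^+|$. The base case is $t=(1,\dots,1)$, where $I_t(L)=I_1(L)$ is generated by the indeterminates of $L$. For the inductive step, fix an index $k$ with $t_k\geq 2$ and exhibit $I_t(L)$ as a single ascending elementary G-biliaison over a strictly smaller ideal in the same family. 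Concretely: let $I_{t'}(L')$ be obtained by replacing the $k$-th distinguished point $(v_k,w_k)$ with a neighboring one and lowering the $k$-th size to $t_k-1$; let the linking ideal be $N:=I_u(M)$, where $\M$ is $\LL$ with the box $(v_k,w_k)$ and its mirror removed — so that $\M$ carries the two interior lattice neighbors of $(v_k,w_k)$ as new distinguished points, each with size $t_k$. Both $I_{t'}(L')$ and $N$ have smaller complexity, hence by induction are reached from an ideal of indeterminates by ascending elementary G-biliaisons; it then suffices to realize $I_t(L)$ as one more such step on top of $I_{t'}(L')$.

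For this one must check: (a) $N$ is Cohen--Macaulay, unmixed and generically Gorenstein, of height $\hgt I_t(L)-1=\hgt I_{t'}(L')-1$; and (b) $N\subseteq I_t(L)\cap I_{t'}(L')$ together with an isomorphism of graded $R/N$-modules $I_t(L)/N\cong[I_{t'}(L')/N](-1)$, which is exactly the datum of an ascending elementary G-biliaison of height $1$. The height statements in (a) follow from the formula for the codimension in terms of $\tilde{\LL}$ recalled above, once one observes that the ladders attached to $\LL$, $\LL'$ and $\M$ by that construction differ by a single box. Cohen--Macaulayness and unmixedness (indeed primeness) of symmetric ladder determinantal ideals are classical (see \cite{co94a}, \cite{co94c}), and in the mixed case may also be bootstrapped inside this very induction, since a Basic Double Link of Cohen--Macaulay ideals is Cohen--Macaulay. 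Generic Gorensteinness of $N$ is a local check at the generic points of its minimal primes, where $R/N$ becomes a localization of a small symmetric determinantal ring, Gorenstein in the relevant codimension.

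The hard part will be (b), the module isomorphism. I would realize it by multiplication by a carefully chosen $(t_k-1)\times(t_k-1)$ minor $\delta$ of $X$, supported in the $k$-th region but outside $\tilde{\LL}$: one shows that, modulo $N$, the assignment $\bar g\mapsto\overline{\delta g}$ carries the natural generators of $I_{t'}(L')$ into $I_t(L)$ — a determinantal identity of Laplace/Sylvester type, whose genuinely new subtlety compared with the generic-matrix situation of Theorem~\ref{maxmin} is that it must be controlled for symmetric choices of rows and columns and across a symmetric ladder — and, conversely, that after inverting $\delta$ the induced map is onto $I_t(L)/N$. Since (by (a) and the inductive identifications) $R/N$ is a normal Cohen--Macaulay domain, $I_t(L)/N$ and $I_{t'}(L')/N$ are divisorial ideals, and a homomorphism between divisorial ideals that is an isomorphism in codimension one is an isomorphism; the containments above show that multiplication by $\delta$ is such a map. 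The degree shift $-1$ is forced by $\deg\delta=t_k-1$ against the drop of the size by one. (This is precisely Theorem~2.4 of \cite{go10}, whose explicit biliaison construction I would follow step by step.)

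With (a) and (b) in hand, $I_t(L)$ is an ascending elementary G-biliaison of $I_{t'}(L')$ on $N$, and induction closes the proof of Theorem~\ref{biliaisonsym}. Finally — this is the result the paper derives next — one feeds this one biliaison step into Lemma~\ref{inid}, arguing exactly as in the proof of Theorem~\ref{gbpf} by induction on $|\LL^+|$: taking $A=in(N)$, $B=in(I_{t'}(L'))$ and $C=(in(\G_t(L)))$, Lemma~\ref{inid} gives $C=in(I_t(L))$, so $\G_t(L)$ is a reduced Gr\"obner basis of $I_t(L)$ for every diagonal term-order, $in(I_t(L))$ is squarefree and Cohen--Macaulay, and its Stanley--Reisner complex is vertex decomposable. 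The only extra combinatorial point there is the splitting $in(\G_t(L))=in(\G_u(M))\cup x_{v_k,w_k}\,in(\G_{t'}(L'))$, which both exhibits $C$ as a Basic Double G-Link of $B$ on $A$ and identifies the link and the deletion at the vertex $(v_k,w_k)$ in the Stanley--Reisner complex of $C$ with those of $B$ and $A$, letting the vertex-decomposability induction go through.
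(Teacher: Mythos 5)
You should first note that the paper does not prove Theorem~\ref{biliaisonsym} at all: it is imported verbatim from \cite{go10}, Theorem~2.4, and the closest in-paper model for its proof is the appendix argument for the one-sided case (Theorem~\ref{laddall}). Your overall route --- induction, with one ascending elementary G-biliaison step from $I_{t'}(L')$ to $I_t(L)$ on $N=I_u(M)$, where $\LL'$ lowers $t_k$ by one and moves the $k$-th distinguished point, and $\M$ removes $(v_k,w_k)$, $(w_k,v_k)$ and doubles the $k$-th distinguished point with size $t_k$ --- is exactly the strategy of \cite{go10} and of the paper's use of it in Theorem~\ref{gbsym}, and your description of how the theorem is then fed into Lemma~\ref{inid} is accurate.

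Two steps of your sketch, however, are flawed as written. (i) Neither of your proposed induction measures decreases for both auxiliary ideals: $\sum_k(t_k-1)$ goes \emph{up} when passing to $N=I_u(M)$, since the entry $t_k$ is duplicated in $u$, while the codimension of $I_{t'}(L')$ equals that of $I_t(L)$, because an elementary biliaison preserves height (as you yourself record in (a)); so with either measure the induction does not close. The induction in \cite{go10}, \cite{de09}, and in the paper's Theorems~\ref{gbsym} and~\ref{laddall} is instead on the size of the ladder, $|\LL^+|$ (resp.\ $|\LL|$), which strictly drops for both $\LL'$ and $\M$. (ii) The multiplier realizing the key isomorphism is not a $(t_k-1)$-minor $\delta$: multiplication by a homogeneous element of degree $t_k-1$ would give $I_t(L)/N\cong[I_{t'}(L')/N](-(t_k-1))$, i.e.\ a biliaison of height $t_k-1$, contradicting the shift $-1$ you claim and the height-$1$ step that must be matched with a degree-$1$ Basic Double Link in Lemma~\ref{inid}; the sentence deriving the shift $-1$ from $\deg\delta=t_k-1$ is not a coherent degree count. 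The actual isomorphism is multiplication by the degree-one ratio $f_L/f_{L'}$ in the fraction field of the domain $R/N$, where $f_L$ is a $t_k$-minor of $L_k$ involving row $v_k$ and column $w_k$ and $f_{L'}$ is the $(t_k-1)$-minor obtained by deleting that row and that column (compare the explicit description in the proof of Theorem~\ref{laddall}); your divisorial-ideal argument on the Cohen--Macaulay, generically Gorenstein ring $R/N$ should be applied to that map, not to multiplication by a single minor.
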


The following is the main result of this section. We prove that the
natural generators of symmetric mixed ladder determinantal ideals are
a Gr\"obner basis with respect to any diagonal term-order, and that
the simplicial complexes associated to their initial ideals are vertex
decomposable. In particular, the initial ideals are Cohen-Macaulay.

\begin{thm}\label{gbsym}
Let $X=(x_{ij})$ be an $n\times n$ symmetric matrix of
indeterminates. Let $\LL^+=\LL^+_1\cup\ldots\cup\LL^+_s$ and
$t=(t_1,\ldots,t_s)$. Let $I_t(L)\subset K[X]$ be the
corresponding symmetric mixed ladder determinantal ideal and let
$\G_t(L)$ be the set of minors that generate it. Let $\sigma$ be any
diagonal term-order. Then $\G_t(L)$ is a reduced Gr\"obner basis of
$I_t(L)$ with respect to $\sigma$. Moreover, the initial ideal of
$I_t(L)$ with respect to $\sigma$ is squarefree and Cohen-Macaulay,
and the associated simplicial complex is vertex decomposable.
\end{thm}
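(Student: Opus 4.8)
The plan is to mimic precisely the inductive structure of the proof of Theorem~\ref{gbpf}, replacing pfaffians of ladders by symmetric mixed ladder determinantal ideals and using Theorem~\ref{biliaisonsym} in place of Theorem~\ref{biliaisonpf}. I proceed by induction on the cardinality $|\LL^+|$ of the ladder (equivalently, on the number of indeterminates in $L$). The base case is when $|\LL^+|=1$ or, more generally, when every $t_k=1$: then $\G_t(L)$ consists of indeterminates, so it is trivially a reduced Gr\"obner basis, $in(I_t(L))=I_t(L)$ is generated by indeterminates (hence squarefree and Cohen-Macaulay), and the associated simplicial complex is the empty set, which is vertex decomposable.

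For the inductive step, assuming all $t_k$ are not equal to $1$, I choose an index $k$ with $t_k=\max\{t_1,\dots,t_s\}\geq 2$ and follow the explicit G-biliaison of \cite{go10}, Theorem~2.4. Concretely, I form two auxiliary ladders with strictly fewer entries: one ladder $\LL'$ (with exponent vector $t'$ where the $k$-th entry is decreased by $1$, and with the distinguished point $(v_k,w_k)$ moved inward by one diagonal step) playing the role of $I$, and one ladder $\M$ (obtained by deleting a single box from $\LL^+$, with an added repeated distinguished point) playing the role of $N$. By the inductive hypothesis, $\G_{t'}(L')$ and $\G_{u}(M)$ are reduced Gr\"obner bases, so $in(I_{t'}(L'))=(in(\G_{t'}(L')))$ and $in(I_u(M))=(in(\G_u(M)))$ are squarefree, Cohen-Macaulay monomial ideals, with the correct height relation $\hgt(in(I_{t'}(L')))=\hgt(in(I_u(M)))+1$. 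The key combinatorial observation — which must be checked carefully — is the decomposition
$$in(\G_t(L))=in(\G_u(M))\cup x_{v_k,w_k}\, in(\G_{t'}(L')),$$
i.e. that the leading term (under a diagonal term-order) of each natural generator of $I_t(L)$ is either a leading term of a generator of $I_u(M)$, or is $x_{v_k,w_k}$ times a leading term of a generator of $I_{t'}(L')$; here $x_{v_k,w_k}$ does not appear in any element of $in(\G_u(M))$, so $x_{v_k,w_k}$ is a nonzerodivisor modulo $in(I_u(M))$. Setting $I:=(in(\G_t(L)))=in(I_u(M))+x_{v_k,w_k}\,in(I_{t'}(L'))\subseteq in(I_t(L))$, this exhibits $I$ as a Basic Double G-Link of degree $1$ of $in(I_{t'}(L'))$ on $in(I_u(M))$; hence $I$ is squarefree and Cohen-Macaulay.

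To conclude, I invoke \cite{go10}, Theorem~2.4 to say that $I_t(L)$ is obtained from $I_{t'}(L')$ via an ascending elementary G-biliaison of height $1$ on $I_u(M)$, and then apply Lemma~\ref{inid} (with $J=I_t(L)$, $I=I_{t'}(L')$, $N=I_u(M)$, $C=A+x_{v_k,w_k}B$, $A=in(I_u(M))$, $B=in(I_{t'}(L'))$, $\ell=1$) to deduce $C=in(I_t(L))$; thus $\G_t(L)$ is a Gr\"obner basis, and it is reduced because the leading terms of its elements are squarefree and the minors are the natural generators. Finally, since $I=in(I_u(M))+x_{v_k,w_k}\,in(I_{t'}(L'))$ with $x_{v_k,w_k}$ a nonzerodivisor mod $in(I_u(M))$, the simplicial complexes of $in(I_{t'}(L'))$ and $in(I_u(M))$ are $\lk_{(v_k,w_k)}(\Delta)$ and $\Delta-(v_k,w_k)$ respectively, where $\Delta$ is the complex of $in(I_t(L))$; these are vertex decomposable by induction, and the height relation gives the dimension condition, so $\Delta$ is vertex decomposable.

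The main obstacle is the combinatorial verification that the biliaison of \cite{go10} is, on the level of initial ideals, exactly the Basic Double Link described above — that is, that the chosen box $(v_k,w_k)$ of the ladder is precisely the indeterminate $f$ by which one multiplies, that $\LL'$ and $\M$ as defined above are again ladders with admissible distinguished-point data (so the inductive hypothesis applies), and that the leading terms of the natural generators split cleanly as indicated. This requires carefully tracking how a diagonal term-order selects the main diagonal of each minor, how deleting or shifting a distinguished point changes which minors appear among the generators, and why no cancellation or extra leading terms arise; the skew-symmetric case in Theorem~\ref{gbpf} is a template, but the symmetric setting has its own bookkeeping (in particular the interaction of the diagonal term-order with the symmetry $x_{ij}=x_{ji}$ and with minors that straddle the main diagonal of $X$), which is where the real work lies.
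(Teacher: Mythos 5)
Your proposal is correct and takes essentially the same route as the paper's own proof: the same induction on $|\LL^+|$, the same auxiliary data $(\LL',t')$ and $(\M,u)$, the same splitting $in(\G_t(L))=in(\G_u(M))\cup x_{v_k,w_k}\,in(\G_{t'}(L'))$ making $(in(\G_t(L)))$ a degree~$1$ Basic Double G-Link, the same appeal to \cite{go10}, Theorem~2.4 together with Lemma~\ref{inid}, and the same link/deletion argument for vertex decomposability. The combinatorial verification you flag as the remaining work is exactly the step the paper itself asserts at the same level of detail, so nothing essential is missing relative to the paper's argument.
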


\begin{proof}
Let $$I_t(L)=I_{t_1}(L_1)+\cdots+I_{t_s}(L_s)\subset K[X]$$ be
the symmetric mixed ladder determinantal ideal with ladder $\LL$,
$t=(t_1,\ldots,t_s)$ and distinguished points
$(v_1,w_1),\ldots,(v_s,w_s)$. Let $\G_t(L)$ be the set of natural
generators of $I_t(L)$. We proceed by induction on
$\ell=|\LL^+|$. 

If $\ell=1$, then $\G_t(L)$ consists of one indeterminate, in
particular it is a Gr\"obner basis of the ideal that it generates with
respect to any term-order $\sigma$. Moreover,
$in(I_1(L))=I_1(L)$ is generated by indeterminates, and the simplicial
complex associated to it is the empty set.

We now assume that the thesis holds for ladders $\N$ with 
$|\N^+|<\ell$ and prove it for a ladder $\LL$ with
$|\LL^+|=\ell$. If $t_1=\ldots=t_s=1$, then $\G_t(L)$ consists only of
indeterminates. In particular it is a Gr\"obner basis of the ideal
that it generates with respect to any term-order $\sigma$. Moreover,
$in(I_1(L))=I_1(L)$ is generated by indeterminates, and the simplicial
complex associated to it is the empty set. Otherwise, let
$k\in\{1,\ldots,s\}$ such that $t_k=\max\{t_1,\ldots,t_s\}\geq 2$. Let 
$\LL'$ be the ladder with distinguished points
$$(v_1,w_1),\ldots,(v_{k-1},w_{k-1}),(v_k+1,w_k-1), 
(v_{k+1},w_{k+1}),\ldots,(v_s,w_s)$$
and let $t'=(t_1,\ldots,t_{k-1},t_k-1,t_{k+1},\ldots,t_s)$. 
Let $I_{t'}(L')\subset K[X]$ be the associated symmetric mixed ladder
determinantal ideal. Let $\G_{t'}(L')$ be the set of minors which minimally
generate $I_{t'}(L')$. Since $|\LL'^+|<\ell$, by
induction hypothesis $\G_{t'}(L')$ is a reduced Gr\"obner
basis of $I_{t'}(L')$ with respect to
$\sigma$. Hence $$in(I_{t'}(L'))=(in(\G_{t'}(L'))).$$ 
Let $\mathcal M$ be the ladder obtained from $\LL$ by removing 
$(v_k,w_k)$ and $(w_k,v_k)$. Let
$$(v_1,w_1),\ldots,(v_{k-1},w_{k-1}),(v_k,w_k-1),(v_k+1,w_k), 
(v_{k+1},w_{k+1}),\ldots,(v_s,w_s)$$  be the distinguished points of
$\mathcal M$ and let $u=(t_1,\dots,t_{k-1},t_k,t_k,t_{k+1},\dots,t_s)$. 
Let $I_u(M)\subset K[X]$ be the associated symmetric mixed ladder
determinantal ideal. Let $\G_u(M)$ be the set of minors which minimally
generate $I_u(M)$. Since $|\M^+|=\ell-1<\ell$, by induction hypothesis
$\G_u(M)$ is a reduced Gr\"obner basis of $I_u(M)$ with respect to any
diagonal term-order. Hence $$in(I_u(M))=(in(\G_u(M))).$$

It follows from~\cite{go10}, Theorem~2.4 that $I_t(L)$ is
obtained from $I_{t'}(L')$ via an ascending elementary G-biliaison of
height $1$ on $I_u(M)$. The ideals $in(I_{t'}(L'))$ and $in(I_u(M))$
are squarefree and Cohen-Macaulay by induction hypothesis. Moreover
$$in(\G_t(L))=in(\G_u(M))\cup x_{v_k,w_k}in(\G_{t'}(L')),$$
where $x_{u,v}\G$ denotes the set of products $x_{u,v}g$ for $g\in\G$. Since
$x_{v_k,w_k}$ does not appear in $in(\G_u(M))$, it does not divide zero
modulo the ideal $in(I_u(M))$. Therefore, 
\begin{equation}\label{cpx_symm}
I:=(in(\G_t(L)))=in(I_u(M))+x_{v_k,w_k}in(I_{t'}(L'))\subseteq
in(I_t(L))
\end{equation}
and $I$ is a Basic Double G-Link of degree 1 of $in(I_{t'}(L'))$ on
$in(I_u(M))$. Hence $I$ is a squarefree Cohen-Macaulay ideal. By
Lemma~\ref{inid} $I=in(I_t(L))$, hence $\G_t(L)$ is a Gr\"obner
basis of $I_t(L)$ with respect to any diagonal term-order.

Let $\Delta$ be the simplicial complex associated to
$in(I_t(L))$. By (\ref{cpx_symm}) the simplicial complexes
associated to $in(I_{t'}(L')$ and $in(I_{u}(M))$ are
$\lk_{(v_k,w_k)}(\Delta)$ and $\Delta-(v_k,w_k)$,
respectively. $\Delta$ is vertex decomposable, since
$\lk_{(v_k,w_k)}(\Delta)$ and $\Delta-(v_k,w_k)$ are by induction
hypothesis. 
\end{proof}

\begin{rmks}
\begin{enumerate}
\item From the proof of the previous theorem it also follows that
  $in_t(L)$ is obtained from an ideal generated by indeterminates via
  a sequence of degree 1 Basic Double G-links which only involve
  squarefree monomial ideals. In particular, it is
  glicci. Moreover, the associated simplicial complex is shellable.
\item The proof of Theorem~\ref{gbsym} given above does not constitute
  a new proof of the fact that the $t$-minors in a symmetric matrix or
  in a symmetric ladder are a Gr\"obner basis with respect to any
  diagonal term-order for the ideal that they generate. In fact, our
  proof is based on Theorem~2.4 in~\cite{go10}, which in turn relies
  on the fact that minors all of the same size in a ladder of a
  symmetric matrix generate a prime ideal. Primality of this ideal is
  classically deduced from the fact that the minors are a Gr\"obner
  basis. So we are extending (and not providing a new proof of) the
  results in~\cite{co94c}.
\item Our argument, however, gives a new proof of the fact that the
  minors generating a cogenerated ideal in a symmetric matrix or in a
  ladder thereof are a Gr\"obner basis with respect to a diagonal
  term-order, knowing that minors all of the same size in a ladder of
  a symmetric matrix are a Gr\"obner basis of the ideal that they
  generate.
\end{enumerate}
\end{rmks}

\section{Mixed ladder determinantal ideals}\label{ladd_sect}

In this section, we prove that minors of mixed size in one-sided
ladders are Gr\"obner bases for the ideals that they generate, with
respect to any anti-diagonal term order. Moreover, the associated
simplicial complex is vertex decomposable. These results are already
known, and were established in different levels of generality
in~\cite{na86}, \cite{co93}, \cite{co95}, \cite{go00}, \cite{kn05},
\cite{go07b}, \cite{kn09} and~\cite{kn09p}. 
The papers~\cite{kn05}, \cite{kn09} and~\cite{kn09p} follow a
different approach than the others. The family that they treat
strictly contains that of one-sided mixed ladder determinantal
ideals. The paper~\cite{go07b} follows essentially the same approach
as the the first four papers, extending it to the family of two-sided
mixed ladder determinantal ideals. 
The proof we give here is different and independent of all the
previous ones: we use the result that we established in
Section~\ref{mainlemma} and the liaison results which were established
in~\cite{go07b}.

In~\cite{go07b} the first author approached the
study of ladder determinantal ideals from the opposite point of view:
she first proved that the minors were Gr\"obner bases of the ideals
that they generated. From the Gr\"obner basis result, she deduced that
the ideals are prime and Cohen-Macaulay, and computed their height. Finally,
she proved the liaison result. Here we  wish to take the opposite
approach: namely, deduce the fact that the minors are a Gr\"obner
basis for the ideal that they generate from the liaison result. In
order to do that, we need to show how to obtain the liaison result
independently of the computation of a Gr\"obner basis. We do this in
Appendix~\ref{ladd_app} following the approach of~\cite{de09}
and~\cite{go10}. In this section, we deduce the result about Gr\"obner
bases from the G-biliaison result.

We start by introducing the relevant notation. Let $X=(x_{ij})$ be an
$m\times n$ matrix whose entries are distinct indeterminates, $m\leq n$.

\begin{defn}\label{ladd}
A {\bf one-sided ladder} $\LL$ of $X$ is a subset of the set
$\X=\{(i,j)\in\NN^2 \mid 1\le i\le m,\ 1\le j\le n \}$
with the properties: 
\begin{enumerate}
\item $(1,m)\in\LL$,
\item if $i<h,j>k$ and $(i,j),(h,k)\in\LL$, then
$(i,k),(i,h),(h,j),(j,k)\in\LL$.
\end{enumerate}
\end{defn}

We do not make any connectedness assumption on the ladder $\LL$.
For ease of notation, we also do not assume that $X$ is the 
smallest matrix having $\LL$ as a ladder. 
Observe that $\LL$ can be written as
$$\LL=\bigcup_{k=1}^u \{(i,j)\in\X\mid i\leq c_k \mbox{ and } j\geq
d_k\}$$ for some  integers $1\leq c_1<\ldots<c_u\leq m$, $1\leq
d_1<\ldots<d_u\leq n$. 

We call $(c_1,d_1),\ldots,(c_u,d_u)$ {\bf lower outside
corners} and $(c_1,d_2),\ldots,(c_{u-1},d_u)$ {\bf lower inside
corners} of the ladder $\LL$. A one-sided ladder has at least one
lower outside corner. A one-sided ladder which has exactly one lower
outside corner is a matrix. All the corners belong to $\LL$, and the
ladder $\LL$ corresponds to its set of lower outside (or equivalently,
lower inside) corners. The {\bf lower border} of $\LL$ consists of the
elements $(c,d)$ of $\LL$ such that $(c+1,d-1)\not\in\LL$. See 
Figure~\ref{lb}. Notice that the lower corners of a ladder belong to
its lower border.  
\begin{figure}[h!]
\begin{center}
\input{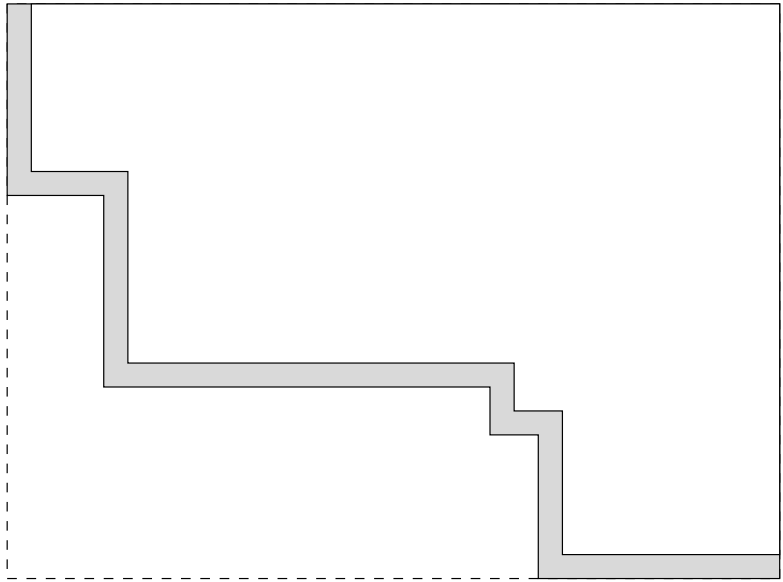}
\caption{An example of a ladder with shaded lower border.}
\label{lb}
\end{center}
\end{figure}

Given a ladder $\LL$ we set $L=\{x_{ij}\in X\mid
(i,j)\in\LL\}$. We denote by $|\LL|$ the cardinality of the ladder.
We let $I_t(L)$ denote the ideal generated by the set of the
$t$-minors of $X$ which involve only indeterminates of
$L$. In particular, $I_t(X)$ is the ideal of $K[X]$ generated by 
the $t\times t$-minors of $X$.

\begin{defn}
Let $\{(a_1,b_1),\ldots,(a_s,b_s)\}$ be a subset of the
lower border of $\LL$ which contains all the lower outside
corners. We order them so that $1\leq a_1\leq\ldots\leq a_s\leq m$
and $1\leq b_1\leq\ldots\leq b_s\leq n$.
Let $t=(t_1,\ldots,t_s)$ be a vector of positive integers. For
$k=1,\ldots,s$, denote by 
$$\LL_k=\{ (i,j)\in\X\mid i\leq a_k \mbox{ and } j\geq
b_k\}\;\; \mbox{ and }\;\; L_k=\{x_{i,j}\mid (i,j)\in\LL_k\}.$$ Notice
that $\LL_k\subseteq\LL$ and $L_k\subseteq
L$. Moreover, $\LL=\cup_{k=1}^s\LL_k.$ The ideal 
$$I_t(L)=I_{t_1}(L_1)+\ldots+I_{t_s}(L_s)$$
is a {\bf mixed ladder determinantal ideal}. 
We denote $I_{(t,\ldots,t)}(L)$ by $I_t(L)$.
We call $(a_1,b_1),\ldots,(a_s,b_s)$ {\bf distinguished points} of $\LL$.
Notice that a ladder is uniquely determined by the set of its
distinguished points, but it does not determine them.
\end{defn}

\begin{notat}\label{gensladd}
Denote by $\G_{t_k}(L_k)$ the set of the $t_k$-minors of $X$
which involve only indeterminates of $L_k$ and
let $$\G_t(L)=\G_{t_1}(L_1)\cup\ldots\cup\G_{t_s}(L_s).$$ 
The elements of $\G_t(L)$ are a minimal system of generators of
$I_t(L)$. We sometimes refer to them as ``natural generators''.
\end{notat}

We will need the following result. See the appendix for a self
contained proof.

\begin{thm}[\cite{go07b}, Theorem~2.1]\label{biliaisonladd}
Any mixed ladder determinantal ideal can be obtained from an ideal
generated by indeterminates by a finite sequence of ascending
elementary G-biliaisons.
\end{thm}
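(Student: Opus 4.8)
The plan is to argue by induction on the cardinality $|\LL|$ of the one-sided ladder, reusing the combinatorial moves ($L'$, $M$, $t'$, $u$) that appear in the proofs of Theorem~\ref{gbpf} and Theorem~\ref{gbsym}, but now carrying out the liaison step at the level of the ideals themselves rather than of their initial ideals. If $t_1=\cdots=t_s=1$ (in particular if $|\LL|=1$) then $I_t(L)=I_1(L)$ is generated by the indeterminates in $L$ and there is nothing to prove. Otherwise pick $k$ with $t_k=\max\{t_1,\dots,t_s\}\ge 2$ and let $p=(a_k,b_k)$ be the corresponding distinguished point. I would form a mixed ladder determinantal ideal $I_{t'}(L')$, where $t'$ has $t_k$ lowered by one and $\LL'$ is $\LL$ with $p$ moved inward so that $|\LL'|<|\LL|$ and $\hgt I_{t'}(L')=\hgt I_t(L)$, and a second mixed ladder determinantal ideal $I_u(M)$, where $M$ is obtained from $\LL$ by deleting $p$ and inserting one or two nearby distinguished points so that $|M|<|\LL|$, with $u$ the adjusted size vector carrying a repeated $t_k$. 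The inductive hypothesis then applies to both $I_{t'}(L')$ and $I_u(M)$; in particular $N:=I_u(M)$ is glicci, hence $R/N$ is arithmetically Cohen--Macaulay.

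To run the biliaison I would first check the formal requirements: $N\subseteq I_t(L)\cap I_{t'}(L')$, and $\hgt N=\hgt I_t(L)-1$, the latter a bookkeeping computation with the height formula for mixed ladder determinantal ideals (the analogue for one-sided ladders of the height formulas recalled earlier in the pfaffian and symmetric cases; cf.~\cite{go07b}). Since a mixed ladder determinantal ideal is prime, $R/N$ is a domain, hence automatically generically Gorenstein, so $N$ is a legitimate intermediate ideal for an elementary G-biliaison. For the appendix's self-contained version I would strengthen the inductive hypothesis to also assert primality, Cohen--Macaulayness and normality of $I_t(L)$, carrying these along the biliaison chain, since the classical proofs of primality of constant-size ladder determinantal ideals go through Gr\"obner bases, which we are trying to avoid here.

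The heart of the argument — and the step I expect to be the main obstacle — is the graded isomorphism of $R/N$-modules
\[
I_t(L)/N \;\cong\; [I_{t'}(L')/N](-1).
\]
I would exhibit it explicitly by multiplication by $q/\delta$ inside $\mathrm{Frac}(R/N)$, where $\delta$ is a suitable $(t_k-1)$-minor of $X$ and $q$ a suitable $t_k$-minor obtained by bordering $\delta$ with one extra row and column through $p$, both minors lying in the ladder and $\delta$ a nonzerodivisor modulo the prime $N$. The content is then twofold: \emph{well-definedness}, i.e.\ the containment $q\cdot I_{t'}(L')\subseteq \delta\cdot I_t(L)+N$ together with the matching reverse containment up to the shift, which should follow from a family of exchange (straightening) relations among the minors supported on $\LL$, the same relations that underlie the primality arguments for these ideals; and \emph{bijectivity}, which I would obtain either from the generalized divisor class group of the normal domain $R/N$, by showing that the classes of the two divisorial ideals $I_t(L)/N$ and $I_{t'}(L')/N$ differ precisely by $\mathrm{div}(q/\delta)$, or — more in the spirit of this paper, as in Lemma~\ref{inid} — by checking that the resulting inclusion of graded $R/N$-modules is an equality through a Hilbert function comparison, the Hilbert functions of ladder determinantal ideals being available. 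Because $\deg q-\deg\delta=1>0$, the map realizes $I_t(L)$ as an \emph{ascending} elementary G-biliaison of height $1$ from $I_{t'}(L')$ on $N$; splicing this step onto the chain provided for $I_{t'}(L')$ by the inductive hypothesis yields the desired finite sequence of ascending elementary G-biliaisons starting from an ideal generated by indeterminates, and completes the induction.
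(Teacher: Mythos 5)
Your skeleton (induction on $|\LL|$, the moves producing $\LL'$, $t'$, $\M$, $u$, and the explicit isomorphism given by multiplication by the ratio of a $t_k$-minor through $(a_k,b_k)$ and its complementary $(t_k-1)$-minor) is the same as the paper's, which proves this theorem as part of Theorem~\ref{laddall} in Appendix~\ref{ladd_app}. The genuine gap is exactly at the point you flag as needing care: the hypotheses that make the step a G-biliaison. You need $N=I_u(M)$ to be Cohen--Macaulay and $G_0$, and you propose to ``carry primality, Cohen--Macaulayness and normality along the biliaison chain.'' Cohen--Macaulayness does propagate through an elementary biliaison on a CM ideal, but primality (and normality) do not: an elementary biliaison of a prime ideal need not be prime --- indeed the whole point of this paper is that the corresponding initial ideals, which are monomial and far from prime, are connected by exactly parallel basic double links. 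So primality of $I_u(M)$ cannot be inherited from the chain; it needs its own argument inside the induction, and that argument is the main content of the appendix which your proposal omits: the localization isomorphism of Lemma~\ref{laddlocal} (inverting a ladder variable $x_{u,v}$ identifies $K[L]/I_t(L)[x_{u,v}^{-1}]$ with a smaller mixed ladder determinantal ring, up to adjoining variables), combined with a Bruns--Vetter-style associated-primes argument that uses unmixedness (from CM-ness) and the height formula $\hgt I_t(L)=|\tilde{\LL}|<|\LL|$, which is itself proved within the same induction.

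A second, related problem is your argument for bijectivity of the map $I_t(L)/N\to[I_{t'}(L')/N](-1)$. The divisor-class-group route needs normality of $R/N$, which is never established and, as above, does not come for free from the induction. The Hilbert-function route is circular in this setting: the Hilbert functions of mixed ladder determinantal ideals are classically computed from their initial ideals, i.e., from the Gr\"obner basis results one is trying to re-derive; Lemma~\ref{inid} compares a candidate monomial ideal $C$ with $in(J)$ assuming the biliaison between $I$ and $J$ is already known, so it cannot be used to establish that biliaison. The paper instead invokes the explicit isomorphism of \cite{go07b}, Theorem~2.1 (multiplication by $f_L/f_{L'}$ over the domain $R/N$), which is legitimate once primality, Cohen--Macaulayness and the height of $I_u(M)$ have been obtained independently of Gr\"obner bases --- securing that independence is precisely what Lemma~\ref{laddlocal} and Theorem~\ref{laddall} are for, and your proposal has no substitute for them.
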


We now prove that the natural generators of a mixed ladder
determinantal ideal are a Gr\"obner basis with respect to any anti-diagonal
term-order. 

\begin{thm}\label{gbladd}
Let $X=(x_{ij})$ be an $m\times n$ matrix whose entries are distinct
indeterminates, $m\leq n$, and let $\LL$ be a one-sided ladder of
$X$. Let $t\in\NN^s$ and let $(a_1,b_1),\ldots,(a_s,b_s)$ be the
distinguished points of the ladder. Let $I_t(L)$ be the corresponding
ladder determinantal ideal. Denote by $\G_t(L)$ be the set of minors
that generate $I_t(L)$. Then $\G_t(L)$ is a reduced Gr\"obner basis of
$I_t(L)$ with respect to any anti-diagonal term ordering. Moreover,
the initial ideal $in(I_t(L))$ is squarefree Cohen-Macaulay, and the
associated simplicial complex is vertex decomposable.
\end{thm}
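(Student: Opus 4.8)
The plan is to mimic exactly the inductive structure used in the proofs of Theorem~\ref{maxmin}, Theorem~\ref{gbpf}, and Theorem~\ref{gbsym}, but now with the liaison input coming from Theorem~\ref{biliaisonladd} (equivalently \cite{go07b}, Theorem~2.1). We induct on $\ell = |\LL|$. The base case $\ell = 1$ is trivial: then $\G_t(L)$ consists of a single indeterminate, so it is automatically a reduced Gr\"obner basis, $in(I_1(L)) = I_1(L)$ is generated by an indeterminate, hence squarefree and Cohen--Macaulay, and the associated complex is the empty set, hence vertex decomposable. Likewise, if $t_1 = \cdots = t_s = 1$ then $\G_t(L)$ consists of indeterminates and the same conclusions hold.

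For the inductive step, assume the statement for all one-sided ladders $\N$ with $|\N| < \ell$, and suppose $|\LL| = \ell$ with some $t_k = \max\{t_1,\dots,t_s\} \geq 2$. Following the recipe from \cite{go07b}, I would form two smaller configurations: the ladder $\LL'$ with distinguished points
$$(a_1,b_1),\ldots,(a_{k-1},b_{k-1}),(a_k-1,b_k+1),(a_{k+1},b_{k+1}),\ldots,(a_s,b_s)$$
and $t' = (t_1,\ldots,t_{k-1},t_k-1,t_{k+1},\ldots,t_s)$, giving the ideal $I_{t'}(L')$; and the ladder $\M$ obtained from $\LL$ by removing the single point $(a_k,b_k)$, with appropriately shifted distinguished points and $u = (t_1,\ldots,t_{k-1},t_k,t_k,t_{k+1},\ldots,t_s)$, giving $I_u(M)$. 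Here one must check (as in the pfaffian and symmetric cases) that $|\LL'| < \ell$ and $|\M| = \ell - 1$, so that the induction hypothesis applies to both: $in(I_{t'}(L')) = (in(\G_{t'}(L')))$ and $in(I_u(M)) = (in(\G_u(M)))$, and both initial ideals are squarefree and Cohen--Macaulay with vertex decomposable complexes. By Theorem~\ref{biliaisonladd} (the precise G-biliaison step in \cite{go07b}, Theorem~2.1), $I_t(L)$ is obtained from $I_{t'}(L')$ via an ascending elementary G-biliaison of height $1$ on $I_u(M)$.

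The combinatorial heart of the argument is the decomposition of the initial terms of the generators: one needs
$$in(\G_t(L)) = in(\G_u(M)) \cup x_{a_k,b_k}\, in(\G_{t'}(L')),$$
exactly parallel to the displays in the proofs of Theorems~\ref{maxmin}, \ref{gbpf}, and \ref{gbsym}. Granting this, since $x_{a_k,b_k}$ does not occur in $in(\G_u(M))$, it is a nonzerodivisor modulo $in(I_u(M))$, so
$$I := (in(\G_t(L))) = in(I_u(M)) + x_{a_k,b_k}\, in(I_{t'}(L')) \subseteq in(I_t(L))$$
is a Basic Double G-Link of degree $1$ of $in(I_{t'}(L'))$ on $in(I_u(M))$, and thus $I$ is squarefree and Cohen--Macaulay. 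Applying Lemma~\ref{inid} with $(I,J,N) = (I_{t'}(L'), I_t(L), I_u(M))$ and $(A,B,C) = (in(I_u(M)), in(I_{t'}(L')), I)$ yields $I = in(I_t(L))$, hence $\G_t(L)$ is a Gr\"obner basis, and it is reduced because all its elements have squarefree, pairwise non-divisible initial terms (the anti-diagonal leading term of a $t$-minor is a squarefree monomial, and distinct minors give distinct such monomials). Finally, writing $\Delta$ for the complex of $in(I_t(L))$, the equation above identifies $\lk_{(a_k,b_k)}(\Delta)$ with the complex of $in(I_{t'}(L'))$ and $\Delta - (a_k,b_k)$ with that of $in(I_u(M))$, both vertex decomposable by induction, so $\Delta$ is vertex decomposable, and in particular $in(I_t(L))$ is Cohen--Macaulay.

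I expect the main obstacle to be verifying that the G-biliaison step in \cite{go07b} genuinely takes place \emph{between ideals in the same family} with the bookkeeping of distinguished points I described — i.e., that $\LL'$ and $\M$ are again one-sided mixed ladder configurations with $|\LL'| < \ell$ and $|\M| = \ell-1$ — and, in tandem, that the monomial decomposition $in(\G_t(L)) = in(\G_u(M)) \cup x_{a_k,b_k}\,in(\G_{t'}(L'))$ holds on the nose for an anti-diagonal term-order; this is where the one-sided (rather than two-sided) hypothesis is used, and it parallels the corresponding step in the appendix. Once these combinatorial identifications are in place, the liaison machinery and Lemma~\ref{inid} close the argument formally, just as in the preceding sections.
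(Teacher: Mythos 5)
Your proposal is correct and follows essentially the same route as the paper's own proof: the same induction on $|\LL|$, the same two auxiliary configurations (your $\LL',t'$ and $\M,u$ are the paper's $\N,q$ and $\M,p$), the same decomposition $in(\G_t(L))=in(\G_p(M))\cup x_{a_k,b_k}\,in(\G_q(N))$ yielding a degree-one Basic Double G-Link, the same application of Lemma~\ref{inid} via the G-biliaison of Theorem~\ref{biliaisonladd}, and the same link/deletion identification for vertex decomposability. The combinatorial checks you flag as the main obstacle are exactly the points the paper settles by the explicit biliaison step of \cite{go07b} (re-proved in the appendix), so no essential idea is missing.
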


\begin{proof}
We proceed by induction on $\ell=|\LL|$. If $\ell=1$, then $\G_1(L)$
consists of one indeterminate. Hence $\G_1(L)$ is a reduced Gr\"obner
basis for $I_1(L)$ with respect to any term-order. Moreover,
$I_1(L)=in(I_1(L))$ is generated by indeterminates, hence the
associated simplicial complex is the empty set.

We now assume by induction that the statement holds for ladders $\HH$
with $|\HH|<\ell$, and we prove the statement for a ladder
$\LL$ with $|\LL|=\ell$. If $t=(1,\ldots,1)$, then $\G_1(L)$
consists of indeterminates. Hence $\G_1(L)$ is a reduced Gr\"obner
basis for $I_1(L)$ 
with respect to any term-order. Moreover, $I_1(L)=in(I_1(L))$ is
generated by indeterminates, hence the associated simplicial complex
is the empty set. Otherwise, let $C\subseteq in(I_t(L))$ be the ideal
generated by the initial terms of 
$\G_t(L)$. It suffices to show that $C=in(I_t(L))$ and that $C$ is
Cohen-Macaulay. Let $(a_1,b_1),\ldots,(a_s,b_s)$ be the distinguished
points of the ladder and choose $k\in\{1,\ldots,s\}$ so that
$t_k=\max\{t_1,\ldots,t_s\}\geq 2$. Let
$\M=\LL\setminus\{(a_k,b_k)\}$, and let   
$$(a_1,b_1),\ldots,(a_{k-1},b_{k-1}),(a_k-1,b_k),(a_k,b_k+1),
(a_{k+1},b_{k+1}),\ldots,(a_s,b_s)$$ be the distinguished points of
$\M$. Let $p=(t_1,\ldots,t_{k-1},t_k,t_k,t_{k+1},\ldots,t_s)\in
\NN^{s+1}$. Let $\N$ be the ladder with distinguished points 
$$(a_1,b_1),\ldots,(a_{k-1},b_{k-1}),(a_k-1,b_k+1),(a_{k+1},b_{k+1}),
\ldots,(a_s,b_s)$$ and let $q=(t_1,\ldots,t_{k-1},t_k-1,
t_{k+1},\ldots,t_s)\in\NN^s$.
As shown in the proof of Theorem~\ref{biliaisonladd}, $I_t(L)$ is obtained
from $I_q(N)$ via an elementary G-biliaison of height 1 on
$I_p(M)$. 

Let $A=in(\G_p(M))$ and let $B=in(\G_q(N))$.
The induction hypothesis applies to both $I_p(M)$ and $I_q(N)$, hence
$\G_p(M)$ is a Gr\"obner basis for $I_p(M)$, $\G_q(N)$ is a Gr\"obner
basis for $I_q(N)$ and $A=in(I_p(M)),B=in(I_q(N))$ are Cohen-Macaulay
ideals. Notice that 
$$in(\G_t(L))=in(\G_p(M))\cup x_{a_k,b_k}in(\G_q(N))$$ where $x_{a_k,b_k}\G$
denotes the set of products $x_{a_k,b_k}g$ for $g\in\G$. Since
$x_{a_k,b_k}$ does not appear in $in(\G_p(M))$, it does not divide zero
modulo the ideal $A=in(I_p(M))$. Therefore, 
\begin{equation}\label{cpx_ladd}
in(I_p(M))+x_{a_k,b_k}in(I_q(N))=C\subseteq in(I_t(L))
\end{equation} 
and $C$ is a Basic Double G-Link of degree 1 of $in(I_q(N))$ on
$in(I_p(M))$. $C$ is Cohen-Macaulay and squarefree, since $A$ and $B$
are. By Lemma~\ref{inid} we 
conclude that $C\subseteq in(I_t(L))$ and that $\G_t(L)$ is a
Gr\"obner basis of $I_t(L)$ with respect to any anti-diagonal
term-order. 

Let $\Delta$ be the simplicial complex associated to
$in(I_{t}(L))$. By (\ref{cpx_ladd}) the simplicial complexes
associated to $in(I_{q}(N)$ and $in(I_{p}(M))$ are
$\lk_{(a_k,b_k)}(\Delta)$ and $\Delta-(a_k,b_k)$,
respectively. $\Delta$ is vertex decomposable, since
$\lk_{(a_k,b_k)}(\Delta)$ and $\Delta-(a_k,b_k)$ are by induction
hypothesis. 
\end{proof}

\begin{rmks}
\begin{enumerate}
\item From the proof of the theorem it also follows that the ideal
  $in(I_t(L))$ is obtained from an ideal generated by indeterminates
  via a sequence of degree 1 Basic Double G-links, which only involve
  squarefree monomial ideals. Hence in particular it is glicci. Moreover, the
  associated simplicial complex is shellable.
\item Notice that, in contrast to Theorem~\ref{gbpf} and
Theorem~\ref{gbsym}, Theorem~\ref{gbladd} does constitute a new proof
of the fact that $t$-minors in a generic matrix or in a one-sided
ladder are a Gr\"obner basis with respect to any anti-diagonal term-order
for the ideal that they generate. In fact, in Theorem~\ref{laddall} we
give a proof of primality for mixed ladder determinantal ideals which
is independent of any previous Gr\"obner basis results.
\end{enumerate}
\end{rmks}

By following the same approach as in the previous sections and using the
result of Narasimhan from~\cite{na86}, we can prove that the natural
generators of mixed ladder determinantal ideals from two-sided ladders
are a Gr\"obner basis for the ideal that they generate with respect to
any anti-diagonal term-order (see~\cite{go07b} for the relevant
definitions). This is, to our knowledge, the largest 
family of ideals generated by minors in a ladder for which the minors
are a Gr\"obner basis for the ideal that they generate. Notice, e.g.,
that cogenerated ladder determinantal ideals all belong to this
family. The result was already established by the first author
in~\cite{go07b}, but a different proof can be given using the
techniques discussed in this paper. Notice moreover that we also show
that the simplicial complex associated to the initial ideal is vertex
decomposable. In particular, it is shellable. Since the proof is
completely analogous to the previous ones, we omit it.

\begin{thm}
Let $X=(x_{ij})$ be an $m\times n$ matrix whose entries are distinct
indeterminates, $m\leq n$, and let $\LL$ be a ladder of
$X$. Let $t\in\NN^s$ and let $(a_1,b_1),\ldots,(a_s,b_s)$ be the
distinguished points of the ladder. Let $I_t(L)$ be the corresponding
ladder determinantal ideal. Denote by $\G_t(L)$ be the set of minors
that generate $I_t(L)$. Then $\G_t(L)$ is a reduced Gr\"obner basis of
$I_t(L)$ with respect to any anti-diagonal term ordering, and the
initial ideal $in(I_t(L))$ is squarefree Cohen-Macaulay. Moreover, the
associated simplicial complex is vertex decomposable.
\end{thm}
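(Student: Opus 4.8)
The plan is to mimic the proofs of Theorem~\ref{gbpf}, Theorem~\ref{gbsym}, and especially Theorem~\ref{gbladd}, replacing the one-sided ladder by a two-sided ladder and invoking the result of Narasimhan in place of Theorem~\ref{biliaisonladd}. First I would set up the induction on $\ell=|\LL|$, where now $\LL$ is a two-sided ladder with distinguished points $(a_1,b_1),\ldots,(a_s,b_s)$. The base case $\ell=1$ and the degenerate case $t=(1,\ldots,1)$ are handled exactly as before: $\G_t(L)$ consists of indeterminates, so it is trivially a reduced Gr\"obner basis, the initial ideal equals $I_t(L)$ and is generated by indeterminates, and the associated simplicial complex is the empty set, hence vertex decomposable.

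For the inductive step, I would pick $k$ with $t_k=\max\{t_1,\ldots,t_s\}\geq 2$ and, exactly as in the proof of Theorem~\ref{gbladd}, form two auxiliary ladders: $\M=\LL\setminus\{(a_k,b_k)\}$ with distinguished points obtained by splitting $(a_k,b_k)$ into $(a_k-1,b_k)$ and $(a_k,b_k+1)$ (vector $p$ repeating $t_k$), and $\N$ with the distinguished point $(a_k,b_k)$ replaced by $(a_k-1,b_k+1)$ (vector $q$ with $t_k$ lowered to $t_k-1$). Both $\M$ and $\N$ have strictly fewer boxes than $\LL$ — here one must check that the relevant new points still lie on the lower border of a legitimate two-sided ladder, which is where the two-sided geometry enters — so the induction hypothesis applies: $\G_p(M)$ and $\G_q(N)$ are reduced Gr\"obner bases, $A:=in(I_p(M))$ and $B:=in(I_q(N))$ are squarefree Cohen-Macaulay, and their complexes are vertex decomposable. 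The liaison result (Narasimhan, as used in~\cite{go07b}) gives that $I_t(L)$ is obtained from $I_q(N)$ via an ascending elementary G-biliaison of height $1$ on $I_p(M)$.

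Next I would observe the decomposition of leading terms
$$in(\G_t(L))=in(\G_p(M))\cup x_{a_k,b_k}\,in(\G_q(N)),$$
note that $x_{a_k,b_k}$ does not appear in $in(\G_p(M))$ and hence is a non-zerodivisor modulo $A$, and conclude that $C:=(in(\G_t(L)))=A+x_{a_k,b_k}B\subseteq in(I_t(L))$ is a Basic Double G-Link of degree $1$ of $B$ on $A$. Thus $C$ is squarefree and Cohen-Macaulay. Applying Lemma~\ref{inid} with $I=I_q(N)$, $J=I_t(L)$, $N=I_p(M)$ (Cohen-Macaulay since mixed ladder determinantal ideals are), we get $C=in(I_t(L))$, so $\G_t(L)$ is a Gr\"obner basis; reducedness follows because the leading terms are squarefree and pairwise non-divisible, as in the other sections. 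Finally, writing $\Delta$ for the complex of $in(I_t(L))$, the identity $C=A+x_{a_k,b_k}B$ gives $\Delta-(a_k,b_k)=\Delta(A)$ and $\lk_{(a_k,b_k)}(\Delta)=\Delta(B)$, both vertex decomposable by induction, with the correct dimension shift coming from $\hgt B=\hgt A+1$; hence $\Delta$ is vertex decomposable, and in particular shellable and Cohen-Macaulay.

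The main obstacle is combinatorial rather than homological: one must verify that the two auxiliary ladders $\M$ and $\N$ constructed by splitting/collapsing the distinguished point $(a_k,b_k)$ are again honest two-sided ladders with a valid set of distinguished points, and that the explicit G-biliaison step in the two-sided case (the content of~\cite{go07b}, ultimately resting on Narasimhan's primality result) takes place between ideals in the same family and lowers $|\LL|$. Once the liaison step and the bookkeeping of distinguished points are in place, the Hilbert-function comparison in Lemma~\ref{inid} and the vertex-decomposability argument go through verbatim as in the one-sided case, which is why the authors omit the details.
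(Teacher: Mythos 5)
Your proposal is correct and follows exactly the route the paper intends for this theorem: the paper omits the proof precisely because it is the verbatim analogue of Theorem~\ref{gbladd}, with the elementary G-biliaison step supplied by~\cite{go07b} (resting on Narasimhan's primality result~\cite{na86}) and the Hilbert-function comparison of Lemma~\ref{inid} doing the rest. Your bookkeeping of the auxiliary ladders $\M$, $\N$ and the vertex-decomposability conclusion matches the one-sided argument, so there is nothing further to add.
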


\appendix

\section{G-biliaison of mixed ladder determinantal
  ideals from one-sided ladders}\label{ladd_app}

Mixed ladder determinantal ideals were introduced and studied by the
first author in~\cite{go07b}. They are a natural family to study, from
the point of view of liaison theory.  
In~\cite{go07b}, the first author proved
that the minors were Gr\"obner bases of the ideals that they
generated. From the Gr\"obner basis result, she deduced that the
ideals are prime, and Cohen-Macaulay, and computed their height. Finally, 
she proved the liaison result. In this article we wish to take the
opposite approach: namely, deduce the fact that the minors are a Gr\"obner
basis for the ideal that they generate from the liaison result. In
order to do that, we need to show how to obtain the liaison result
independently of the computation of a Gr\"obner basis. We do this by
showing that the approach of~\cite{de09} (for ladder ideals of
pfaffians) and~\cite{go10} (for mixed symmetric determinantal ideals)
applies also to mixed ladder determinantal ideals.
More precisely, we prove that mixed ladder determinantal ideals from
one-sided ladders are prime and Cohen-Macaulay. We also give a
different proof of the formula for their height which was given
in~\cite{go07b}. We do all these without relying on, and independently
of, the computation of a Gr\"obner basis. The liaison results
of~\cite{go07b} then follow, in particular we obtain a proof of
Theorem~\ref{biliaisonladd} which does not rely on any Gr\"obner basis
results. 

We follow the definitions and notations of Section~\ref{ladd}. The
following easily follow from the definition of a mixed ladder
determinantal ideal. 

\begin{rmks}\label{laddconv}[\cite{go07b}, Assumption~3 and Lemma~1.13]
\begin{enumerate}
\item We may assume without loss of generality that $t_k\leq\min\{a_k,
  m-b_k+1\}$ for $1\leq k\leq s.$ 
\item We may assume that $b_{k-1}-b_k<t_k-t_{k-1}<a_k-a_{k-1}$ for
  $k\geq 2$. 
\end{enumerate}
\end{rmks}

\begin{notat}\label{ltilde_notat}
Let $\LL$ be a one-sided ladder with distinguished points
$(a_1,b_1),\ldots,(a_s,b_s)$ and $t=(t_1,\ldots,t_s)$. We denote by
$\tilde{\LL}$ the one-sided ladder with lower outside corners 
$$(a_1-t_1+1,b_1+t_1-1),\ldots,(a_s-t_s+1,b_s+t_s-1).$$
$\tilde{\LL}$ is a ladder by Remarks~\ref{laddconv} (2). See
Figure~\ref{ltilde}. 
\begin{figure}[h!]
\begin{center}
\input{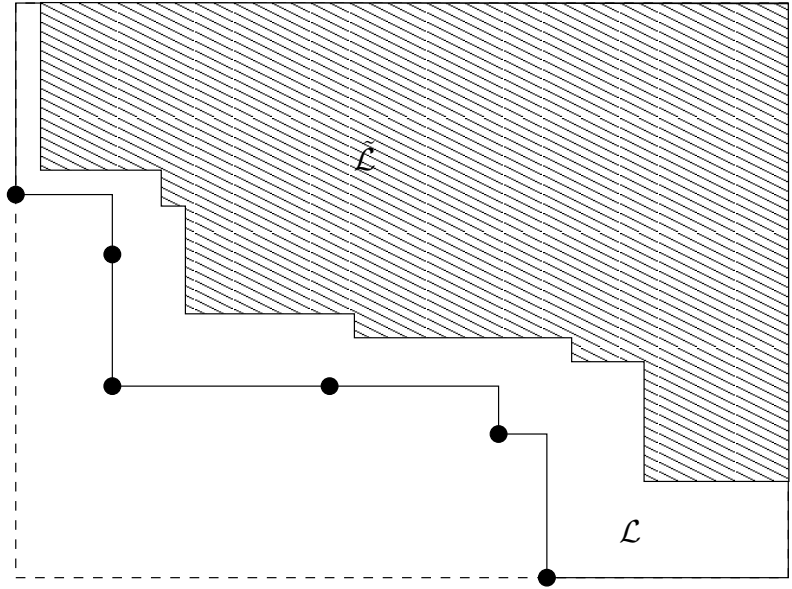}
\caption{A ladder $\LL$ with marked distinguished points. The
  subladder $\tilde{\LL}$ is shaded.}
\label{ltilde}
\end{center}
\end{figure}
\end{notat}

The height of ladder determinantal ideals from one-sided ladders was
first computed by Gonciulea and Miller in~\cite{go00},
Theorem~4.6.3. In Theorem~1.15 of~\cite{go07b}, the first author gave a new
formula for the height of ladder determinantal ideals from ladders
which are not necessarily one-sided. More precisely, she proved that
\begin{equation}\label{ltilde_eqn}
\hgt I_t(L)=|\tilde{\LL}|.
\end{equation}
Both the proofs in ~\cite{go00} and~\cite{go07b} relied on
the computation of a Gr\"obner basis of the ladder determinantal
ideal. In Theorem~\ref{laddall} we give a different proof of the height
formula (\ref{ltilde_eqn}), which is independent of the computation of
a Gr\"obner basis. 

\begin{notat}\label{local}
Let $X=(x_{ij})$ be an $m\times n$ matrix whose entries are distinct
indeterminates, $m\leq n$, and let $\LL=\LL_1\cup\ldots\cup\LL_s$ be a
one-sided ladder of $X$ with distinguished points
$(a_1,b_1),\ldots,(a_s,b_s)$. Let $(u,v)\in\LL$ and assume that
$(u,v)\in\LL_i$ for $j\leq i\leq k$ only. We let $\hat{\LL}$
denote the ladder obtained as follows: remove the entries in row $u$
and column $v$ which do not belong to any $\LL_i$ for
$i\not\in\{j,\ldots,k\}$. The remaining entries in row $u$ and column
$v$ are $(1,v),\ldots,(a_{j-1},v)\in\LL_{j-1}$ and
$(u,b_{k+1}),\ldots,(u,n)\in\LL_{k+1}$. Move the remaining entries in
row $u$ just below region $\LL_k$, i.e., between row $a_k$ and row
$a_k+1$. Move the remaining entries in column $v$ just on the left of
region $\LL_j$, i.e., between column $b_j-1$ and column $b_j$. It is
easy to check that $\hat{\LL}$ is a ladder. Rename the entries of
the ladder as needed, so that $(a,b)$ denotes the entry in position
$(a,b)$. Finally, let
$$(a_1,b_1),\ldots,(a_{j-1},b_{j-1}),
(a_j-1,b_j+1),\ldots,(a_k-1,b_k+1),
(a_{k+1},b_{k+1}),\ldots,(a_s,b_s)$$ be the distinguished points of
$\hat{\LL}$. See also Figure~\ref{lhat}.
\begin{figure}[h!]
\begin{center}
\input{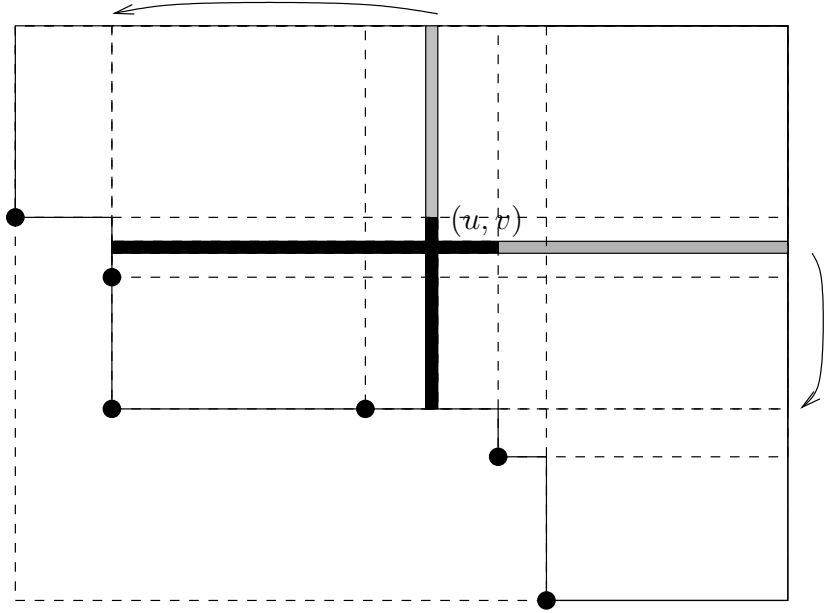}
\caption{An example of the construction of $\hat{\LL}$. The entries in
  black are deleted, while the ones in gray are moved as indicated by
  the arrows.}
\label{lhat}
\end{center}
\end{figure}
\end{notat}

We now prove a technical lemma which will be needed in the proof of
primality. The statement is essentially contained in~\cite{go00}, but
here we prove it for a larger family. The technique of the proof is a
standard one, and can be found, e.g., in~\cite{br88}.

\begin{lemma}\label{laddlocal}
Let $X=(x_{ij})$ be an $m\times n$ matrix whose entries are distinct
indeterminates, $m\leq n$, and let $\LL$ be a one-sided ladder of
$X$. Let $t\in\NN^s$ and let $$(a_1,b_1),\ldots,(a_s,b_s)$$ be the
distinguished points of the ladder. Let $I_t(L)$ be the corresponding
ladder determinantal ideal. Let $(u,v)\in\LL$ be a point of the
ladder. Let $\hat{\LL}$ be the ladder obtained as in
Notation~\ref{local}, with distinguished
points $$(a_1,b_1),\ldots,(a_{j-1},b_{j-1}), 
(a_j-1,b_j+1),\ldots,(a_k-1,b_k+1),
(a_{k+1},b_{k+1}),\ldots,(a_s,b_s).$$ Assume that $t_i\geq 2$ for
$j\leq i\leq k$ and let
$$r=(t_1,\ldots,t_{j-1},t_j-1,\ldots,t_k-1,t_{k+1},\ldots,t_s).$$
Then there is an isomorphism $$K[L]/I_t(L)[x_{u,v}^{-1}]\cong
K[\hat{L}]/I_r(\hat{L})[x_{a_{j-1}+1,v},\ldots,x_{a_k,v},
x_{u,b_j},\ldots,x_{u,b_{k+1}-1},x_{u,v}^{-1}].$$  
\end{lemma}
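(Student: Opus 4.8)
The plan is to realize the isomorphism explicitly by inverting $x_{u,v}$ and performing row and column operations that clear out the $u$-th row and $v$-th column, in the standard way used for localizations of determinantal rings (cf. \cite{br88}). First I would choose coordinates so that $(u,v)$ is the entry being inverted, and consider the localization $K[L][x_{u,v}^{-1}]$. For each other point $(a,v)$ in column $v$ of the ladder and each other point $(u,b)$ in row $u$ of the ladder, the $2\times 2$ minor built from rows $a,u$ and columns $b,v$ lies in $I_t(L)$ only when $t_i = 2$ in the relevant region — so in general I instead use the observation that, after inverting $x_{u,v}$, one can apply the change of variables $x_{a,b} \mapsto x_{a,b} - x_{a,v}\,x_{u,v}^{-1}\,x_{u,b}$ for all $(a,b)$ with $a\neq u$, $b\neq v$. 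This substitution is an automorphism of the localized polynomial ring, and it has the effect that every $t'$-minor of $X$ using row $u$ and column $v$ becomes (up to a unit $\pm x_{u,v}$) a $(t'-1)$-minor of the transformed matrix not using row $u$ or column $v$, while minors not using both row $u$ and column $v$ are unchanged in an appropriate sense.

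The key steps, in order, are: (1) set up the automorphism $\varphi$ of $K[L][x_{u,v}^{-1}]$ given by the above substitution, and identify its effect on each generator of $I_t(L)$; (2) check that $\varphi(I_t(L))$ is generated by the images, which split into two kinds — the minors entirely avoiding row $u$ and column $v$ (these generate a copy of the ideal $I_t$ on the ``untouched'' part of the ladder) and the minors that used row $u$ and column $v$ (these become, up to the unit $x_{u,v}$, minors of size one smaller, with the size vector dropping from $t_i$ to $t_i-1$ exactly in the regions $\LL_i$, $j\le i\le k$, containing $(u,v)$); (3) verify that after the substitution the variables $x_{a_{j-1}+1,v},\ldots,x_{a_k,v}$ and $x_{u,b_j},\ldots,x_{u,b_{k+1}-1}$ become free (polynomial) variables over which the transformed ideal is extended, since they no longer appear in any minimal generator; and (4) match the combinatorial data: the relocated row $u$ (moved just below $\LL_k$) and column $v$ (moved just left of $\LL_j$) together with the dropped sizes produce precisely the ladder $\hat{\LL}$ with distinguished points $(a_1,b_1),\ldots,(a_{j-1},b_{j-1}),(a_j-1,b_j+1),\ldots,(a_k-1,b_k+1),(a_{k+1},b_{k+1}),\ldots,(a_s,b_s)$ and size vector $r$. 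Steps (1)–(2) are purely formal once the substitution is written down; step (4) is the bookkeeping that the construction in Notation~\ref{local} was designed to encode.

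The main obstacle I expect is step (2)–(4): carefully tracking which minors of $X$ survive the localization and showing that the transformed generating set is exactly $I_r(\hat L)$ together with the extra free variables, with no spurious extra generators and no missing ones. In particular, one must check that a $t_i$-minor of $X$ in region $\LL_i$ that uses row $u$ and column $v$ is, after $\varphi$, a $(t_i-1)$-minor sitting inside $\hat\LL_i$ (using the relocated row and column), and conversely that every $(t_i-1)$-minor of $\hat L$ arises this way; this is where the precise placement of the moved row/column in Notation~\ref{local} (just below $\LL_k$, just left of $\LL_j$) is essential, because it guarantees that the set of admissible minor positions matches up. The delicate point is the boundary behavior at the regions $\LL_{j-1}$ and $\LL_{k+1}$, where the row and column entries are \emph{not} deleted but relocated; one checks that because $(u,v)\notin\LL_i$ for $i\notin\{j,\ldots,k\}$, no generator of $I_{t_{j-1}}(L_{j-1})$ or $I_{t_{k+1}}(L_{k+1})$ involves $x_{u,v}$, so those generators are untouched by $\varphi$ and reappear verbatim in $I_r(\hat L)$. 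Once this matching is established, counting variables on both sides confirms the isomorphism, and the proof is complete.
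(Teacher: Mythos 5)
Your proposal is essentially the argument the paper gives: invert $x_{u,v}$ and use the standard Bruns--Vetter substitution $x_{a,b}\mapsto x_{a,b}\mp x_{a,v}x_{u,b}x_{u,v}^{-1}$ to trade $t_i$-minors through $(u,v)$ for $(t_i-1)$-minors of the modified matrix, then match the combinatorics with $\hat{\LL}$; the paper writes this as an explicit pair of mutually inverse $K$-algebra maps between $K[L][x_{u,v}^{-1}]$ and $K[\hat L][x_{a_{j-1}+1,v},\ldots,x_{a_k,v},x_{u,b_j},\ldots,x_{u,b_{k+1}-1},x_{u,v}^{-1}]$, which is your steps (1)--(4) packaged slightly differently.

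Two points in your sketch need repair, though both are fixable and the paper's formulation shows how. First, you apply the substitution to \emph{all} entries $(a,b)$ with $a\neq u$, $b\neq v$; over $K[L]$ this is not even well defined, because for an entry of $\LL_i$ with $i<j$ whose column $b$ satisfies $b<b_j$ the correcting term involves $x_{u,b}$ with $(u,b)\notin\LL$ (and symmetrically $x_{a,v}$ can fail to lie in $L$ for $i>k$). The paper restricts the substitution to entries lying in $\LL_j\cup\ldots\cup\LL_k$, which is exactly what makes the map land in the stated ring and is what you should do. Second, your justification for the outer regions -- that generators of $I_{t_{j-1}}(L_{j-1})$ and $I_{t_{k+1}}(L_{k+1})$ ``are untouched by $\varphi$'' because they do not involve $x_{u,v}$ -- is not correct as stated: such a minor can involve entries of the overlap $\LL_{j-1}\cap\LL_j$ (or $\LL_{k+1}\cap\LL_k$), and those entries \emph{are} transformed. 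What is true, and what one should check, is that on each outer rectangle $\LL_i$ the substitution amounts to adding multiples of column $v$ (resp.\ row $u$), which belongs to that rectangle, to other columns (resp.\ rows); hence the ideal of $t_i$-minors of $L_i$ is carried onto the ideal of $t_i$-minors of $\hat L_i$ even though individual generators change. With these corrections your outline coincides with the paper's proof, including the key identity $\varphi(I_{t_i}(L_i))=I_{t_i-1}(\hat L_i)$ for $j\leq i\leq k$.
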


\begin{proof}
Under our assumptions, $\hat{\LL}$ is a ladder and $I_r(\hat{L})$
is a mixed 
ladder determinantal ideal. Let $A=K[L][x_{u,v}^{-1}]$ and
$$B=K[\hat{L}][x_{a_{j-1}+1,v},\ldots,x_{a_k,v},
x_{u,b_j},\ldots,x_{u,b_{k+1}-1},x_{u,v}^{-1}].$$
Define a $K$-algebra homomorphism
$$\begin{array}{lcl}\varphi:A & \longrightarrow & B \\
x_{i,j} & \longmapsto & \left\{\begin{array}{ll} 
x_{i,j}+x_{i,v}x_{u,j}x_{u,v}^{-1} & \mbox{if $i\neq u,
  j\neq v$ and $(i,j)\in\LL_j\cup\ldots\cup\LL_k$,} \\
x_{i,j} & \mbox{otherwise.}
\end{array}\right.\end{array}$$
The inverse of $\varphi$ is 
$$\begin{array}{lcl}\psi:B & \longrightarrow & A \\
x_{i,j} & \longmapsto &\left\{\begin{array}{ll} 
x_{i,j}-x_{i,v}x_{u,j}x_{u,v}^{-1} & \mbox{if $i\neq u,
  j\neq v$ and $(i,j)\in\hat{\LL}_j\cup\ldots\cup\hat{\LL}_k$,} \\
x_{i,j} & \mbox{otherwise.}
\end{array}\right.\end{array}$$
It is easy to check that $\varphi$ and $\psi$ are inverse to each
other. Since
$$\varphi(I_{t_i}(L_i)A)=I_{t_i-1}(\hat{L}_i)B$$ for $j\leq i\leq k$, we
have $$\varphi(I_t(L)A)=I_r(\hat{L})B\;\;\; \mbox{hence}\;\;\;
A/I_t(L)A\cong B/I_r(\hat{L})B.$$
\end{proof}

We now prove that mixed ladder determinantal ideals are prime and 
Cohen-Macaulay. We also give a proof of the height formula
(\ref{ltilde_eqn}). For the sake of clarity, we repeat the proof that
mixed ladder 
determinantal ideals are obtained from a linear space by a finite
sequence of ascending elementary G-biliaisons. The proof of primality
is adapted from the proof of Bruns and Vetter for ideals of maximal
minors of a matrix (\cite{br88}, Theorem~2.10). 

\begin{thm}[\cite{go07b}; Theorem~1.15, Theorem~1.18, Theorem~1.21,
  Theorem~2.1]\label{laddall} 
Let $\LL$ be a one-sided ladder of $X$. Let
$(a_1,b_1),\ldots,(a_s,b_s)$ be its distinguished points and let
$t\in\NN^s$. Let $I_t(L)\subset K[X]$ be the corresponding ladder
determinantal ideal. Then:
\begin{enumerate}
\item $I_t(L)$ is prime and Cohen-Macaulay.
\item Let $\tilde{\LL}$ be the one-sided ladder of
  Notation~\ref{ltilde_notat}, i.e., the ladder with lower outside corners 
$(a_1-t_1+1,b_1+t_1-1),\ldots,(a_s-t_s+1,b_s+t_s-1).$ Then
$I_t(L)\subset K[L]$ has height $$\hgt(I_t(L))=|\tilde{\LL}|.$$ 
\item $I_t(L)$ can be obtained from an ideal generated by
  indeterminates by a finite sequence of ascending elementary
  G-biliaisons. 
\end{enumerate}
\end{thm}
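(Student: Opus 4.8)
The plan is to prove all three statements simultaneously by a single induction on $\ell = |\LL|$, mirroring the structure of the proofs of Theorems~\ref{gbpf}, \ref{gbsym}, and~\ref{gbladd}, but now \emph{without} invoking any Gr\"obner basis input. The base case $\ell = 1$ is trivial: $I_t(L)$ is generated by a single indeterminate, hence prime, Cohen-Macaulay, of height $1 = |\tilde{\LL}|$, and obtained from the zero ideal by a single biliaison. For the inductive step, if $t = (1,\ldots,1)$ then $I_t(L)$ is generated by indeterminates and all three claims are immediate; otherwise we pick $k$ with $t_k = \max\{t_1,\ldots,t_s\} \geq 2$ and form, exactly as in the proof of Theorem~\ref{gbladd}, the two auxiliary ladders: $\M = \LL \setminus \{(a_k,b_k)\}$ with exponent vector $p = (t_1,\ldots,t_k,t_k,\ldots,t_s)$, and $\N$ with the shifted distinguished point $(a_k-1, b_k+1)$ and exponent vector $q = (t_1,\ldots,t_k - 1,\ldots,t_s)$. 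Both $|\M| = \ell - 1$ and $|\N| < \ell$, so the induction hypothesis applies to $I_p(M)$ and $I_q(N)$: both are prime, Cohen-Macaulay, with the expected heights, and glicci.

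The heart of the argument is to establish that $I_t(L)$ is a Basic Double Link of $I_q(N)$ on $I_p(M)$, i.e. that $I_t(L) = I_p(M) + x_{a_k,b_k}\, I_q(N)$, and that this is in fact an \emph{ascending elementary G-biliaison} of height~$1$. First I would verify the set-theoretic inclusion $I_p(M) + x_{a_k,b_k} I_q(N) \subseteq I_t(L)$ by a direct cofactor-expansion argument on the minors passing through the corner $(a_k,b_k)$; this is the combinatorial core and requires the conventions in Remarks~\ref{laddconv}. Next, using that $I_p(M)$ is prime with $x_{a_k,b_k} \notin I_p(M)$ (so $I_p(M) : x_{a_k,b_k} = I_p(M)$) and that $\hgt I_p(M) = \hgt I_q(N) + 1$ — a height count via statement~(2) applied inductively to $M$ and $N$, comparing $|\tilde{\M}|$ and $|\tilde{\N}|$ — the general theory of Basic Double Linkage (\cite{mi98},~\cite{kl01}) gives that $C := I_p(M) + x_{a_k,b_k} I_q(N)$ is Cohen-Macaulay of height $\hgt I_p(M)$, and that $C$ is obtained from $I_q(N)$ by an ascending elementary biliaison of height~$1$ on $I_p(M)$; since $I_p(M)$ is prime (hence $G_0$) this is a G-biliaison, proving statement~(3) for $C$. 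A height computation then shows $\hgt C = |\tilde{\LL}|$, giving~(2) modulo the equality $C = I_t(L)$.

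It remains to show $C = I_t(L)$ and that $I_t(L)$ is prime. For this I would use Lemma~\ref{laddlocal}: localizing at $x_{u,v}$ for a suitable generic corner $(u,v)$ of $\LL$ produces an isomorphism $K[L]/I_t(L)[x_{u,v}^{-1}] \cong K[\hat L]/I_r(\hat L)[\ldots]$ where $\hat\LL$ is a smaller ladder and $I_r(\hat L)$ is prime by the induction hypothesis; hence $(I_t(L))_{x_{u,v}}$ is prime, so $I_t(L)$ has a unique minimal prime of maximal dimension, and running over the distinguished corners shows $I_t(L)$ is unmixed with that prime of height $|\tilde{\LL}|$. Since $C \subseteq I_t(L)$, both are unmixed of the same height, and $C$ is Cohen-Macaulay (hence has no embedded primes), it suffices to compare them after localizing at the minimal prime of $C$, or alternatively to show directly that $I_t(L)$ is contained in $C$ — which follows because any generator of $I_t(L)$ either already lies in $I_p(M)$ (if it avoids the corner) or can be Laplace-expanded along row $a_k$ or column $b_k$ into a combination of a minor not through $(a_k,b_k)$ (in $I_p(M)$) and $x_{a_k,b_k}$ times a smaller minor (in $I_q(N)$). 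This forces $C = I_t(L)$, whence $I_t(L)$ is Cohen-Macaulay and prime, completing the induction. \textbf{The main obstacle} I anticipate is the purely combinatorial bookkeeping: verifying that the distinguished-point data of $\M$ and $\N$ genuinely define ladders satisfying Remarks~\ref{laddconv}, that the Laplace expansion lands exactly in $I_p(M) + x_{a_k,b_k} I_q(N)$, and that the height identity $\hgt I_p(M) = \hgt I_q(N) + 1$ comes out of comparing $|\tilde{\M}|$, $|\tilde{\N}|$, and $|\tilde{\LL}|$; the liaison-theoretic and primality-by-localization parts are then routine given the induction hypothesis.
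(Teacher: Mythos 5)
Your inductive framework, base cases, and choice of the auxiliary data $\M$, $p$, $\N$, $q$ match the paper's Appendix proof, but the central step of your argument is false: the determinantal ideals do \emph{not} satisfy the Basic Double Link equality $I_t(L) = I_p(M) + x_{a_k,b_k}\, I_q(N)$, and even the inclusion $x_{a_k,b_k} I_q(N) \subseteq I_t(L)$ fails. For a concrete counterexample take $X$ a generic $2\times 3$ matrix, $\LL$ the full matrix with the single distinguished point $(2,1)$ and $t=(2)$, so $I_t(L)=I_2(X)$. Then $x_{a_k,b_k}=x_{21}$, $I_p(M)=(x_{12}x_{23}-x_{13}x_{22})$ and $I_q(N)=(x_{12},x_{13})$; the ideal $I_p(M)+x_{21}I_q(N)$ contains the monomial $x_{21}x_{12}$, which cannot lie in the prime ideal $I_2(X)$, and conversely the minor $x_{11}x_{22}-x_{12}x_{21}\in I_2(X)$ is not in $I_p(M)+x_{21}I_q(N)$ (set $x_{12}=x_{13}=0$). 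Your Laplace-expansion argument does not repair this: expanding a $t_k$-minor through $(a_k,b_k)$ along row $a_k$ produces terms $x_{a_k,j}$ times $(t_k-1)$-minors, and those smaller minors are not in $I_p(M)$, which is generated by $t_k$-minors in that region. The decomposition $A+x_{a_k,b_k}B$ is a feature of the \emph{initial} (monomial) ideals only --- that is precisely what Lemma~\ref{inid} and Theorem~\ref{gbladd} exploit --- and it simply does not hold for the determinantal ideals themselves, so statements (1)--(3) for $C$ tell you nothing about $I_t(L)$.

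What the paper does instead is to relate $I_t(L)$ and $I_{t'}(L')=I_q(N)$ by an elementary G-biliaison of height $1$ on $I_u(M)=I_p(M)$: one exhibits an isomorphism $I_t(L)/I_u(M)\cong \bigl(I_{t'}(L')+I_u(M)\bigr)/I_u(M)$ (up to a degree shift) given by multiplication by the ratio $f_L/f_{L'}$ of a $t_k$-minor through row $a_k$ and column $b_k$ and the complementary $(t_k-1)$-minor --- an isomorphism of $R/I_u(M)$-modules, not an ideal-theoretic sum. Cohen-Macaulayness and the height formula then transfer along this biliaison by induction (using $\tilde{\LL}=\tilde{\LL'}$), while primality is handled separately: after reducing to the case $t_j\ge 2$ for all $j$, Lemma~\ref{laddlocal} shows each localization $K[L]/I_t(L)[x_{u,v}^{-1}]$ is a domain, and an associated-primes argument, which uses unmixedness (coming from the already-established Cohen-Macaulayness, not from the localizations) together with $\hgt I_t(L)=|\tilde{\LL}|<|\LL|$, rules out a second minimal prime. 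Your localization sketch is in this spirit, but as written it derives unmixedness from the localizations and rests the identification of $I_t(L)$ on the false decomposition above, so the inductive step does not go through; to fix the proposal you would need to replace the Basic Double Link claim by the multiplication-by-$f_L/f_{L'}$ biliaison (or some other genuine argument relating the three determinantal ideals).
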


\begin{proof}
We prove the statement by induction on $\ell=|\LL|$. If $\ell=1$, then
$I_t(L)$ is generated by one indeterminate. In particular, it is prime
and Cohen-Macaulay. Moreover $\tilde{\LL}=\LL$ and $\hgt
I_t(L)=1=|\tilde{\LL}|$. 

We now prove the statement for a ladder $\LL$ with $|\LL|=\ell$. By
induction hypothesis, we may assume that the statement holds for any
ladder with fewer than $\ell$ entries. If $t=(1,\ldots,1)$, then
$I_t(L)$ is generated by $\ell$ indeterminates. In particular, it is prime
and Cohen-Macaulay. Moreover $\tilde{\LL}=\LL$ and $\hgt
I_t(L)=\ell=|\tilde{\LL}|$. 

Otherwise we have
$t_k=\max\{t_1,\ldots,t_s\}\geq 2$. By Remarks~\ref{laddconv} (2) we
have $a_k>a_{k-1}$ and $b_k<b_{k+1}$. Let 
$t'=(t_1,\ldots,t_{k-1},t_k-1,t_{k+1},\ldots,t_s)$ and let $\LL'$ be 
the ladder obtained from $\LL$ by removing the entries
$(a_{k-1}+1,b_k),\ldots,(a_k-1,b_k),(a_k,b_k),(a_k,b_k-1)\ldots,  
(a_k,b_{k+1}-1)$. Let $$(a_1,b_1),\dots,(a_{k-1},b_{k-1}),(a_k-1,b_k-1),
(a_{k+1},b_{k+1}),\dots,(a_s,b_s)$$ be the distinguished points of
$\LL'$. Notice that $\tilde{\LL}=\tilde{\LL'}$. Since $|\LL'|<\ell$,
by induction hypothesis $I_{t'}(L')$ is Cohen-Macaulay and has $\hgt
I_{t'}(L')=|\tilde{\LL'}|=\ell$. Moreover, $I_{t'}(L')$ can be
obtained from an ideal generated by indeterminates by a finite
sequence of ascending elementary G-biliaisons.  
Let $\M$ be the ladder obtained from $\LL$ by removing the entry
$(a_k,b_k)$, and let $$(a_1,b_1),\dots,(a_{k-1},b_{k-1}),(a_k-1,b_k),
(a_k,b_k+1),(a_{k+1},b_{k+1}),\dots,(a_s,b_s)$$ be the distinguished points of
$\M$. Let $u=(t_1,\ldots,t_{k-1},t_k,t_k,t_{k+1},\ldots,t_s)$ and let
$I_u(M)$ be the corresponding ladder determinantal ideal. Notice that
$\tilde{\M}$ is obtained from $\tilde{\LL}$ by removing the entry 
$(a_k-t_k+1,b_k+t_k-1)$. Since $|\M|=\ell-1<\ell$, by induction
hypothesis $I_u(M)$ is prime and Cohen-Macaulay, and $\hgt
I_u(M)=|\tilde{\M}|=\ell-1$.  
As shown in~\cite[Theorem~2.1]{go07b}, $I_t(L)$ is
obtained from $I_{t'}(L')$ by an ascending elementary G-biliaison of
height $1$ on $I_u(M)$. More precisely, let $f_L$ be a $t_k$-minor of
$L$ which involves rows and columns $a_k$ and $b_k$, and let $f_{L'}$
be the $(t_k-1)$-minor of the submatrix of $L'$ obtained from the
previous one by deleting rows and columns $a_k$ and $b_k$. Then 
$$I_t(L)/I_u(M)\cong I_{t'}(L')+I_u(M)/I_u(M)$$ 
and the isomorphism is given by multiplication by $f_L/f_{L'}$. By
induction hypothesis, $I_u(M)$ is Cohen-Macaulay and prime, hence also
generically Gorenstein. Therefore $I_t(L)$ is Cohen-Macaulay, since
$I_{t'}(L')$ is. Moreover, $I_{t'}(L')$ is obtained from an ideal of
linear forms by a sequence of ascending elementary
G-biliaisons. Therefore, the same holds for $I_t(L)$. 

In order to prove that $I_t(L)$ is prime, we may assume that
$t_j\geq 2$ for all $1\leq j\leq s$. In fact, if $t_1=1$ let
$\tau=(t_2,\ldots,t_s)$ and $\N=\LL_2\cup\ldots\cup\LL_s$. Then
$K[L]/I_t(L)\cong K[N]/I_{\tau}(N)$, hence $I_t(L)$ is prime if and
only if $I_{\tau}(N)$ is. Similarly if $t_s=1$. If $t_j=1$ for some
$1<j<s$, let $\tau=(t_1,\ldots,t_{j-1},t_j+1,t_{j+1},\ldots,t_s)$ and
let $\N$ be the ladder with distinguished points
$$(a_1,b_1),\ldots,(a_{j-1},b_{j-1}),(a_j+1,b_j-1),(a_{j+1},b_{j+1}),
\ldots,(a_s,b_s).$$ Let $u=a_j+1$ and $v=b_j-1$.
By Lemma~\ref{laddlocal} there is an isomorphism 
$$K[N]/I_{\tau}(N)[x_{u,v}^{-1}]\cong
K[L]/I_t(L)[x_{a_{j-1}+1,v},\ldots,x_{a_j,v},
x_{u,b_j},\ldots,x_{u,b_{j+1}-1},x_{u,v}^{-1}].$$
Since $x_{u,v}\not\in L$, $I_t(L)$ is prime if $I_{\tau}(N)$ is.

Hence we may assume without loss of generality that $t_j\geq 2$ for
all $1\leq j\leq s$. We now show that $I_t(L)$ is a prime ideal. Let
$(u,v)\in\LL$ and let $A=K[L]/I_t(L)[x_{u,v}^{-1}]$. By 
induction hypothesis and by Lemma~\ref{laddlocal} we have that $A$ is
an integral domain. Therefore, there is exactly one associated prime
ideal $P_{u,v}$ of $I_t(L)$ such that $x_{u,v}\not\in P_{u,v}$. If $P$
is the only minimal associated prime of $I_t(L)$, then $I_t(L)=P$ is
prime. Suppose by contradiction then that there is another associated
prime $Q$ of $I_t(L)$, $Q\neq P$. Since $A$ is an integral domain, it
must be $x_{u,v}\in Q$. Let $(i,j)\in\LL$ such that $x_{i,j}\not\in
Q$. Existence of such an $(i,j)$ follows from the observation that
$I_t(L)$ is unmixed of height $$\hgt(I_t(L))=|\tilde{\LL}|<|\LL|.$$
Repeating the argument above for $x_{i,j}$ instead of $x_{u,v}$, one
sees that $x_{i,j}\in P$. In particular, the image of $x_{i,j}$ in 
$A$ is zero. However, from Lemma~\ref{laddlocal} one can verify that
the image of $x_{i,j}$ in $A$ is different from zero. This yields a
contradiction, hence $I_t(L)$ is prime.
\end{proof}

\end{document}